\documentclass[11pt,twoside]{article}
\usepackage{amssymb,amsmath,amsthm,amsfonts,mathrsfs,hyperref}
\usepackage[T1]{fontenc}
\usepackage{times}
\usepackage{enumerate,cite,mathtools,enumitem,microtype,needspace}
\allowdisplaybreaks[2]
\AtBeginDocument{%
  \setlength{\abovedisplayskip}{10pt plus 3pt minus 2pt}%
  \setlength{\belowdisplayskip}{10pt plus 3pt minus 2pt}%
  \setlength{\abovedisplayshortskip}{5pt plus 2pt}%
  \setlength{\belowdisplayshortskip}{7pt plus 2pt minus 2pt}%
}
\hypersetup{colorlinks=true,linkcolor=blue,citecolor=red,urlcolor=cyan,
pdftitle={Properties and applications of Muckenhoupt A1 growth weights},pdfauthor={Dinghuai Wang}}
\newtheorem{theorem}{Theorem}[section]
\newtheorem{lemma}[theorem]{Lemma}
\newtheorem{proposition}[theorem]{Proposition}
\newtheorem{corollary}[theorem]{Corollary}
\theoremstyle{definition}

\theoremstyle{plain}
\numberwithin{equation}{section}
\newcommand{\R}{\mathbb R}
\newcommand{\ind}{\mathbf 1}
\newcommand{\dx}{\,dx}
\newcommand{\BMO}{\mathrm{BMO}}
\title{Properties and applications of Muckenhoupt $A_1$ growth weights}
\author{Dinghuai Wang}
\date{}
\makeatletter
\renewcommand{\@maketitle}{%
  \newpage\null\vskip2em
  \begin{center}
    \let\footnote\thanks
    {\LARGE\@title\par}
    \ifx\@author\@empty\else
      \vskip1.5em
      {\large\lineskip.5em\begin{tabular}[t]{c}\@author\end{tabular}\par}
    \fi
    \ifx\@date\@empty\else\vskip1em{\large\@date\par}\fi
  \end{center}
  \par\vskip1.5em
}
\makeatother
\begin{document}
\maketitle
\thispagestyle{myheadings}

\begin{abstract}
For $\beta\ge0$, let $A_1^{(\beta)}$ be the class of weights
satisfying
$$
 [w]_{A_1^{(\beta)}}:=
 \sup_{0<\varepsilon\le1}
       \varepsilon^\beta[w]_{A_{1+\varepsilon}}<\infty.
$$
These classes satisfy
$A_1\subsetneq A_1^{(\beta)}\subsetneq\bigcap_{p>1}A_p$
for $\beta>0$. For every $\alpha\ge0$, we prove that
$w\in A_1^{(\beta)}$ if and only if
$$
 w\bigl(\{M_{L(\log L)^\alpha}f>\lambda\}\bigr)
 \le C\int_{\R^n}\frac{|f(x)|}{\lambda}
 \left[\log\left(e+\frac{|f(x)|}{\lambda}\right)\right]^{\alpha+\beta}
 w(x)\dx.
$$
The least constant $C$ is comparable to $[w]_{A_1^{(\beta)}}$.
Also, we obtain
corresponding characterizations for Calder\'on--Zygmund
operators and their commutators.

\vskip0.2truecm
\noindent\textbf{Keywords.} Muckenhoupt weights; Orlicz maximal operators;
singular integrals; commutators; endpoint estimates.
\vskip0.2truecm
\noindent\textbf{2020 Mathematics Subject Classification.} 42B20, 42B25, 46E30.
\end{abstract}

\section{Introduction and main results}\label{sec:intro}

Let $w$ be a locally integrable function on $\R^n$ with
$0<w<\infty$ almost everywhere. For cubes $Q$ with sides parallel
to the coordinate axes, write
$$
 w(E)=\int_Ew(x)\dx,\qquad
 w_Q=\frac{1}{|Q|}\int_Qw(x)\dx.
$$
Let $M$ be the uncentered Hardy--Littlewood maximal operator,
$$
 Mf(x)=\sup_{Q\ni x}\frac{1}{|Q|}\int_Q|f(y)|\,dy.
$$
The Muckenhoupt characteristics are
\begin{equation}\label{eq:Ap}
 \begin{aligned}
 [w]_{A_p}
 &=\sup_Q w_Q\left(\frac{1}{|Q|}
          \int_Qw(x)^{-\frac{1}{p-1}}\dx\right)^{p-1},
 &&1<p<\infty,\\
 [w]_{A_1}
 &=\sup_Q\frac{w_Q}{\operatorname*{ess\,inf}_Qw}.
 \end{aligned}
\end{equation}
These classes characterize the weighted estimates for $M$.
Muckenhoupt's theorem \cite{Muckenhoupt} gives
$$
 \begin{aligned}
 w\in A_p
 &\quad\Longleftrightarrow\quad
 \int_{\R^n}(Mf)^p w\dx\le C\int_{\R^n}|f|^p w\dx,
 \qquad1<p<\infty,\\
 w\in A_1
 &\quad\Longleftrightarrow\quad
 \sup_{\lambda>0}\lambda w(\{Mf>\lambda\})
       \le C\int_{\R^n}|f|w\dx,
 \end{aligned}
$$
where the estimates hold for all $f\in L^\infty_c(\R^n)$,
with $C$ independent of $f$. For an account of the classical
weighted theory, see \cite[Chapter~7]{Grafakos}. For related
weighted classes and applications, see also \cite{WY2025,WZ2026}.

H\"older's inequality gives the monotonicity
$$
 [w]_{A_q}\le[w]_{A_p}\le[w]_{A_1},
 \qquad1<p\le q<\infty.
$$
Moreover, for each fixed cube $Q$, the normalized $L^r$ norms
of $w^{-1}$ increase to its essential supremum as $r\to\infty$.
Taking $r=\frac{1}{\varepsilon}$, we obtain
$$
 \lim_{\varepsilon\downarrow0}
 \left(\frac{1}{|Q|}\int_{Q}w^{-\frac{1}{\varepsilon}}\dx\right)^\varepsilon
 =\|w^{-1}\|_{L^\infty(Q)}
 =\frac{1}{\operatorname*{ess\,inf}_Qw}.
$$
Taking the supremum over $Q$ and using this monotonicity yields
$$
 A_1\subseteq\bigcap_{p>1}A_p,\qquad
 [w]_{A_1}
 =\lim_{\varepsilon\downarrow0}[w]_{A_{1+\varepsilon}}
 =\sup_{0<\varepsilon\le1}[w]_{A_{1+\varepsilon}}.
$$
The identity holds with values in $[1,\infty]$.
Membership in $\bigcap_{p>1}A_p$ requires finiteness of each
characteristic, whereas membership in $A_1$ requires a uniform
bound as $p\downarrow1$. We consider polynomial growth of
these characteristics. For $\beta\ge0$, we call $w$ a
Muckenhoupt $A_1$ growth weight of order $\beta$ and write
$w\in A_1^{(\beta)}$ if
\begin{equation}\label{eq:class}
 [w]_{A_1^{(\beta)}}:=
 \sup_{0<\varepsilon\le1}
       \varepsilon^\beta[w]_{A_{1+\varepsilon}}<\infty.
\end{equation}
In particular,
$
 A_1^{(0)}=A_1$ and $
 A_1\subsetneq A_1^{(\beta)}\subsetneq\bigcap_{p>1}A_p
$
for $\beta>0$.

The relation between polynomial growth of the characteristics
and logarithmic weak endpoint estimates goes back to Carbery,
Chang, and Garnett \cite{CCG}. Weighted weak-type inequalities in Orlicz spaces were studied in
\cite{KermanTorchinsky,Bagby,Pick,GP}; for iterated maximal operators, see \cite{Kita}.
%For $M$, we compare the best endpoint constant with the
%weight characteristic. We also treat $M_{L(\log L)^\alpha}$, singular
%integrals, and commutators.

For $t\ge0$, set
$$
 \Phi_\gamma(t)=t[\log(e+t)]^\gamma,\qquad\gamma\ge0.
$$
For real $\alpha\ge0$, define
\begin{equation}\label{eq:orliczdef}
 \begin{split}
 \|f\|_{L(\log L)^\alpha,Q}
 &=\inf\left\{a>0:
       \frac{1}{|Q|}\int_Q\Phi_\alpha\left(\frac{|f|}{a}\right)\dx\le1\right\},\\
 M_{L(\log L)^\alpha}f(x)
 &=\sup_{Q\ni x}\|f\|_{L(\log L)^\alpha,Q}.
 \end{split}
\end{equation}
These are Orlicz maximal operators. In the case $\alpha=1$,
one has the pointwise equivalence
$$
 M_{L\log L}f(x)\simeq_n M(Mf)(x).
$$
Carozza and Passarelli di Napoli \cite{CarozzaPassarelli}
studied compositions of Orlicz maximal operators. Their
results include the preceding equivalence and its analogues
for more general Young functions. Orlicz maximal operators
also occur in sufficient conditions for two-weight inequalities;
see P\'erez \cite{PerezMaximal}.
The standard properties used below are recalled in
Section~\ref{sec:proofs}. In the statements
below, the estimates are required for every
$f\in L^\infty_c(\R^n)$ and $\lambda>0$, with a constant
independent of $f$ and $\lambda$.

\begin{theorem}\label{thm:maximal}
Let $\alpha,\beta\ge0$. Then $w\in A_1^{(\beta)}$ if and
only if there is a constant $C<\infty$ such that
\begin{equation}\label{eq:orliczmaximal}
 w\bigl(\{M_{L(\log L)^\alpha}f>\lambda\}\bigr)
 \le C\int_{\R^n}\Phi_{\alpha+\beta}\left(\frac{|f(x)|}{\lambda}\right)w(x)\dx.
\end{equation}
If $C$ denotes the least constant in
\eqref{eq:orliczmaximal}, then
$$
 C\simeq_{n,\alpha,\beta}[w]_{A_1^{(\beta)}}.
$$
\end{theorem}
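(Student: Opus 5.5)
The proof splits into sufficiency ($w\in A_1^{(\beta)}$ implies \eqref{eq:orliczmaximal} with $C\lesssim[w]_{A_1^{(\beta)}}$) and necessity (testing \eqref{eq:orliczmaximal} on suitable functions to bound $\varepsilon^\beta[w]_{A_{1+\varepsilon}}$).

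\emph{Sufficiency.} By the standard Calder\'on--Zygmund decomposition for Orlicz averages, we may reduce to a dyadic grid and pass to a disjoint family of maximal cubes $\{Q_j\}$ with $\|f\|_{L(\log L)^\alpha,Q_j}>\lambda$. Normalize $\lambda=1$, so that $\frac{1}{|Q_j|}\int_{Q_j}\Phi_\alpha(|f|)>1$. The task is to bound each $w(Q_j)$ by $[w]_{A_1^{(\beta)}}\int_{Q_j}\Phi_{\alpha+\beta}(|f|)w$.

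Fix $Q=Q_j$. For $0<\varepsilon\le1$, Hölder's inequality with exponents $1+\varepsilon$ and $(1+\varepsilon)'$ gives
$$
 \int_Q g\le \Bigl(\int_Q g^{1+\varepsilon}w\Bigr)^{\frac1{1+\varepsilon}}\Bigl(\int_Q w^{-1/\varepsilon}\Bigr)^{\frac{\varepsilon}{1+\varepsilon}},
$$
and hence
$$
 w(Q)\Bigl(\frac{1}{|Q|}\int_Q g\Bigr)^{1+\varepsilon}\le [w]_{A_{1+\varepsilon}}\int_Q g^{1+\varepsilon}w.
$$
Apply this with $g=\Phi_\alpha(|f|)$ on the part where $g$ is large. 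Split $f$ into dyadic level sets $E_k=\{2^k<|f|\le 2^{k+1}\}$ and choose $\varepsilon_k\approx 1/k$ on each. This gives $t^{\varepsilon_k}\approx 1$ on $E_k$, at the cost of a factor $\varepsilon_k^{-\beta}\approx k^\beta\approx[\log(e+t)]^\beta$. The goal is
$$
 w(Q)\lesssim [w]_{A_1^{(\beta)}}\int_Q\Phi_{\alpha+\beta}(|f|)w.
$$
To handle the sum over $k$ cleanly, I would instead prove the intermediate lemma
$$
 w(\{M_{L(\log L)^\alpha}f>1\})\lesssim \sup_\varepsilon \varepsilon^\beta[w]_{A_{1+\varepsilon}}\cdot\int\Phi_{\alpha+\beta}(|f|)w
$$
by a layer decomposition $f=\sum_k f\ind_{E_k}$. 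Weak-type quasi-subadditivity of $M_{L(\log L)^\alpha}$ lets us use $M_{L(\log L)^\alpha}f\lesssim\sum_k M_{L(\log L)^\alpha}(f\ind_{E_k})$. On each piece, the pointwise bound $M_{L(\log L)^\alpha}h\lesssim (M(|h|^{1+\varepsilon}))^{1/(1+\varepsilon)}\varepsilon^{-\alpha}$ combines with the weak $(1+\varepsilon,1+\varepsilon)$ bound for $M$ with constant $[w]_{A_{1+\varepsilon}}$. Summing the resulting contributions requires a weighted threshold $\lambda_k=c k^{-2}$ at level $k$ to make the pieces summable.

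\textbf{Main obstacle.} The bookkeeping in this sum is the delicate point: one must keep the exponent at exactly $\alpha+\beta$ rather than losing an extra logarithm. The naive union bound above loses a factor $k^{2}$. I would fix this by exploiting that the $A_{1+\varepsilon}$ weak bound yields $2^{k\varepsilon}\approx$ constant when $\varepsilon=1/k$. I would also use the sharp relation between $\|h\|_{L(\log L)^\alpha}$ and $L^{1+\varepsilon}$ norms with constant $\varepsilon^{-\alpha}$, applied with the geometric decay $2^{-(k-j)\varepsilon}$ across levels, in place of a union bound.

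\emph{Necessity.} Fix a cube $Q$ and $\varepsilon\in(0,1]$, and take the truncated test function $f=\min(w,N)^{-1/\varepsilon}\ind_Q$. On $Q$,
$$
 M_{L(\log L)^\alpha}f\ge\|f\|_{L(\log L)^\alpha,Q}\ge \frac{1}{|Q|}\int_Q f .
$$
Choose $\lambda=\frac{1}{|Q|}\int_Qf$, or a fraction of it, so that the level set contains $Q$. The right-hand side is then
$$
 \int_Q\Phi_{\alpha+\beta}(f/\lambda)w .
$$
Bounding $\log(e+f/\lambda)$ is where one gains the factor $\varepsilon^{-\beta}$ rather than something worse. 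A pointwise estimate $[\log(e+s)]^{\alpha+\beta}\le C_\gamma\,\varepsilon^{-(\alpha+\beta)}s^{\varepsilon'}$ is too lossy. I would instead use the extra $\alpha$ on the left via $\|f\|_{L(\log L)^\alpha,Q}$, choosing $\lambda$ comparable to the Luxemburg norm, so that only $\beta$ logarithms remain uncompensated. Each such logarithm is then controlled by $\varepsilon^{-1}$ times a power $s^{\varepsilon/(1+\varepsilon)}$ that is absorbed by $fw\cdot f^{\varepsilon}$-type identities ($f^{1+\varepsilon}\cdot w\approx f$ when $f=w^{-1/\varepsilon}$). Rearranging gives $w_Q\bigl(\frac1{|Q|}\int_Q w^{-1/\varepsilon}\bigr)^\varepsilon\lesssim C\varepsilon^{-\beta}$, and letting $N\to\infty$ completes the necessity direction.
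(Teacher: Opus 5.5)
\paragraph{The gap is in sufficiency.} Your sufficiency argument breaks down exactly where you flag the ``main obstacle,'' and the remedy you propose is not an argument. Two things go wrong.

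First, combining $M_{L(\log L)^\alpha}h\lesssim\varepsilon^{-\alpha}\bigl(M(|h|^{1+\varepsilon})\bigr)^{1/(1+\varepsilon)}$ with the weak $(1+\varepsilon,1+\varepsilon)$ bound for $M$ is invalid. The $A_{1+\varepsilon}$ bound controls $w(\{Mh>\mu\})$ by $\mu^{-1-\varepsilon}\int|h|^{1+\varepsilon}w$. You need instead to control $w(\{M(|h|^{1+\varepsilon})>\mu^{1+\varepsilon}\})$, and bounding that by $\mu^{-1-\varepsilon}\int|h|^{1+\varepsilon}w$ for all $h$ is the weighted weak $(1,1)$ inequality for $M$, i.e.\ $w\in A_1$.

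Second, suppose you use the correct local inequality $w(Q)\bigl(|Q|^{-1}\int_Qg\bigr)^{1+\varepsilon}\le[w]_{A_{1+\varepsilon}}\int_Qg^{1+\varepsilon}w$ level by level. Choosing $\varepsilon_k\approx1/k$ by the \emph{height} $2^k$ of the level set still forces a pigeonhole or union bound with thresholds $c_k$ satisfying $\sum_kc_k<\infty$. The resulting factor $c_k^{-1}$ costs more than one extra power of $k\approx\log(e+|f|)$, so you end up with something like $\Phi_{\alpha+\beta+1+\delta}$ instead of $\Phi_{\alpha+\beta}$. Invoking ``geometric decay $2^{-(k-j)\varepsilon}$ across levels'' does not say how this loss is removed, so \eqref{eq:orliczmaximal} is not established.

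\paragraph{The missing idea, and your necessity argument.} You should choose $\varepsilon$ according to the relative \emph{measure} of a set, not the size of $f$. Take $g=\ind_E$ and $\varepsilon=1/\log(e/\rho)$ with $\rho=|E|/|Q|$ in the local inequality. This gives $w(E)/w(Q)\ge\rho/\bigl(e[w]_{A_1^{(\beta)}}[\log(e/\rho)]^\beta\bigr)$. Applied to $E_s=\{x\in Q:w_Q/w(x)>s\}$, it yields $|E_s|\le e\exp[-(s/D)^{1/\beta}]\,|Q|$ for $s>D=e[w]_{A_1^{(\beta)}}$ (with $\beta>0$; $\beta=0$ is the $A_1$ case). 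Hence
$|Q|^{-1}\int_Q\exp\bigl[\bigl(w_Q/(C_\beta[w]_{A_1^{(\beta)}}w)\bigr)^{1/\beta}\bigr]\le2$.
The local estimate then follows with no loss. Average over $Q$ the pointwise Young-type inequality $\Phi_\alpha(t)\le(\alpha+4)^\beta s^{-1}\Phi_{\alpha+\beta}(t)+\frac14\exp(s^{1/\beta})$, taken at $s=w_Q/(C_\beta[w]_{A_1^{(\beta)}}w(x))$ and $t=h(x)/\lambda$. A Vitali covering finishes.

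Your necessity argument, by contrast, is sound once the truncation is fixed, and it differs from the paper's route (which tests $\ind_E$ and uses the converse of the density characterization). The function $\min(w,N)^{-1/\varepsilon}$ is unbounded where $w$ is small, so use $\sigma_N=\min(w^{-1/\varepsilon},N)$, for which $w\le\sigma_N^{-\varepsilon}$ on $Q$. Take $\lambda=\frac12\|\sigma_N\|_{L(\log L)^\alpha,Q}\ge\frac12(\sigma_N)_Q$. Handle $\{\sigma_N\le\lambda\}$ by Jensen. On $\{\sigma_N>\lambda\}$, use $t^{-\varepsilon}[\log(e+t)]^\beta\lesssim_\beta\varepsilon^{-\beta}$ for $t\ge1$ (the power you need is $t^{\varepsilon}$, not $t^{\varepsilon/(1+\varepsilon)}$), together with $|Q|^{-1}\int_Q\Phi_\alpha(\sigma_N/\lambda)\le2(1+\log2)^\alpha$. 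This gives $w_Q(\sigma_N)_Q^\varepsilon\lesssim_{\alpha,\beta}C\varepsilon^{-\beta}$ directly; letting $N\to\infty$ bypasses the paper's moment computation.
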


Since $\Phi_0(t)=t$, we have $M_{L(\log L)^0}=M$.
Taking $\alpha=0$ gives the following corollary.

\begin{corollary}\label{cor:maximal}
Let $\beta\ge0$. Then $w\in A_1^{(\beta)}$ if and only if
there is a constant $C<\infty$ such that
\begin{equation}\label{eq:maximal}
 w(\{Mf>\lambda\})\le C
                  \int_{\R^n}\Phi_\beta\left(\frac{|f(x)|}{\lambda}\right)w(x)\dx.
\end{equation}
The least constant satisfies
$C\simeq_{n,\beta}[w]_{A_1^{(\beta)}}$.
\end{corollary}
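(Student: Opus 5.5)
The statement is the case $\alpha=0$ of Theorem~\ref{thm:maximal}, so the plan is simply to specialize. When $\alpha=0$ we have $\Phi_0(t)=t$. Hence for every cube $Q$, $\|f\|_{L(\log L)^0,Q}$ is the infimum of $a>0$ with $\frac{1}{a|Q|}\int_Q|f|\le1$, which is $\frac{1}{|Q|}\int_Q|f|$. Taking the supremum over cubes containing $x$ gives $M_{L(\log L)^0}f=M f$ pointwise. Also $\Phi_{\alpha+\beta}=\Phi_\beta$ when $\alpha=0$. Therefore \eqref{eq:orliczmaximal} with $\alpha=0$ is literally \eqref{eq:maximal}, with the same admissible constants. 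Theorem~\ref{thm:maximal} then gives both the equivalence and the comparability $C\simeq_{n,0,\beta}[w]_{A_1^{(\beta)}}$, and the dependence on $\alpha=0$ is absorbed into $\simeq_{n,\beta}$.

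If one instead wanted a self-contained argument, I would proceed in two directions.

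\emph{Sufficiency.} Test \eqref{eq:maximal} with $f=w^{-1/\varepsilon}\ind_Q$ (truncated as needed), at the level $\lambda\approx\frac{1}{|Q|}\int_Q w^{-1/\varepsilon}$. This gives $w(Q)$ on the left. On the right, use $\Phi_\beta(t)\lesssim_\beta \varepsilon^{-\beta}t^{1+\varepsilon}$ (valid since $\log(e+t)\lesssim \varepsilon^{-1}(e+t)^{\varepsilon}$). Rearranging then yields $\varepsilon^\beta[w]_{A_{1+\varepsilon}}\lesssim C$.

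\emph{Necessity.} Use a Calderón--Zygmund decomposition together with the $A_{1+\varepsilon}$ weak-type bound for $M$, which has constant $[w]_{A_{1+\varepsilon}}$ in the $L^{1+\varepsilon}$ norm. Split $f$ into level sets $2^k\lambda<|f|\le2^{k+1}\lambda$. Apply the weak $(1+\varepsilon_k,1+\varepsilon_k)$ bound on each piece with $\varepsilon_k\approx 1/k$, so that $2^{k\varepsilon_k}$ stays bounded and $\varepsilon_k^{-\beta}\approx k^\beta\approx\log^\beta(e+|f|/\lambda)$. The level thresholds must be summed with weights $2^{-k}$ (or $k^{-2}$) so that the pieces sum, and the loss is absorbed via the choice $\lambda_k=c_k\lambda$.

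The main obstacle in this direct route is that summation without losing an extra logarithm. Since Theorem~\ref{thm:maximal} is available, however, the corollary follows immediately by the specialization described above.
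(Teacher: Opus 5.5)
Your proof is correct and is exactly the paper's argument: the corollary is the case $\alpha=0$ of Theorem~\ref{thm:maximal}, using $\Phi_0(t)=t$, $M_{L(\log L)^0}=M$ and $\Phi_{\alpha+\beta}=\Phi_\beta$, with the constants then depending only on $n,\beta$. The self-contained sketch is unnecessary, and its second half would not close as written, as you suspected: the union bound over the level pieces needs a summable sequence $c_k$ and costs a factor about $c_k^{-1}$ on the $k$th piece, which is more than one extra power of $\log(e+|f|/\lambda)$; the paper avoids this loss by proving the local test of Lemma~\ref{lem:covering} from the exponential integrability \eqref{eq:proofexp} of $w_Q/w$.
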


Let $T$ be an $L^2(\R^n)$-bounded Calder\'on--Zygmund
operator with
\begin{equation}\label{eq:Tdefinition}
 Tf(x)=\operatorname*{p.v.}\int_{\R^n}K(x,y)f(y)\,dy
 =\lim_{\delta\downarrow0}
       \int_{|x-y|>\delta}K(x,y)f(y)\,dy.
\end{equation}
The kernel satisfies, for some $0<\tau\le1$,
\begin{align}
 |K(x,y)|&\le c_K|x-y|^{-n},\label{eq:size}\\
 |K(x,y)-K(x',y)|+|K(y,x)-K(y,x')|
 &\le c_K\frac{|x-x'|^\tau}{|x-y|^{n+\tau}},
 \quad |x-x'|\le\frac{1}{2}|x-y|.\label{eq:smooth}
\end{align}
Its maximal truncation is
$$
 T^*f(x)=\sup_{\delta>0}
 \left|\int_{|x-y|>\delta}K(x,y)f(y)\,dy\right|.
$$
For the converse estimates, assume that there is $c_0>0$
such that, for every $y\in\R^n$ and $r>0$,
\begin{equation}\label{eq:nondegenerate}
 \min\{|K(x,y)|,|K(y,x)|\}\ge c_0r^{-n}
 \quad\text{for some }x\in\R^n\setminus B(y,r).
\end{equation}
The lower bound is required for both $K(x,y)$ and $K(y,x)$
at the same pair of points. This is stronger than the usual
nondegeneracy condition in
\cite[Definition~2.1.1]{Hytonen}, which only requires a lower
bound for $|K(x,y)|$.
\Needspace{8\baselineskip}
For convolution kernels $K(x,y)=k(x-y)$,
Stein's condition requires a unit vector $e$ and a constant
$c>0$ such that
$$
 |k(te)|\ge c|t|^{-n},\qquad
 t\in\R\setminus\{0\}.
$$
This implies \eqref{eq:nondegenerate}; see
\cite[Chapter~IV, Section~4.6]{Stein} and
\cite[Remark~2.1.2]{Hytonen}.
In particular, Hilbert and Riesz transforms satisfy
\eqref{eq:nondegenerate}.

\begin{theorem}\label{thm:CZ}
Let $\beta\ge0$, and let $T$ be a Calder\'on--Zygmund operator
satisfying \eqref{eq:nondegenerate}. Then $w\in A_1^{(\beta)}$
if and only if there is a constant $C<\infty$ such that
\begin{equation}\label{eq:CZ}
 w(\{T^*f>\lambda\})\le C
                   \int_{\R^n}\Phi_\beta\left(\frac{|f(x)|}{\lambda}\right)w(x)\dx.
\end{equation}
If $C$ is the least constant in \eqref{eq:CZ}, then
$$
 [w]_{A_1^{(\beta)}}\le C_{n,T,\beta}C^2,
 \qquad C\le C\bigl(n,T,\beta,[w]_{A_1^{(\beta)}}\bigr).
$$
The same assertions hold with $T^*f$ replaced by $|Tf|$.
\end{theorem}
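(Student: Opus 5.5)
Sufficiency goes through the maximal-operator theory of Theorem~\ref{thm:maximal}; necessity goes through testing functions built from the nondegeneracy condition \eqref{eq:nondegenerate}, and I expect necessity to be the main obstacle.

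\emph{Sufficiency.} Assume $w\in A_1^{(\beta)}$. Then $w\in\bigcap_{p>1}A_p$, so $w\in A_\infty$.
\begin{enumerate}[label=(\roman*)]
\item First I reduce $T^*$ to $M$. The natural route is a Coifman--Fefferman estimate
$$
 w(\{T^*f>\lambda\})\le C_{w}\,w(\{Mf>c\lambda\})+\text{(small)}.
$$
This follows from a good-$\lambda$ inequality for $A_\infty$ weights. Alternatively, sparse domination $T^*f\lesssim\sum_{Q\in\mathcal S}\langle |f|\rangle_Q\ind_Q$ combined with an $A_\infty$-type weak estimate should give the same conclusion.
\item Corollary~\ref{cor:maximal} then gives
$$
 w(\{Mf>c\lambda\})\lesssim[w]_{A_1^{(\beta)}}\int_{\R^n}\Phi_\beta\left(\frac{|f|}{\lambda}\right)w\dx.
$$
\item The good-$\lambda$ inequality only holds in distributional form, with the small term $\gamma\,w(\{T^*f>2\lambda\})$. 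I remove it by the standard absorption argument for $f\in L^\infty_c$, where $w(\{T^*f>\lambda\})<\infty$. Concretely, I iterate over $\lambda$ and use the doubling-type property $\Phi_\beta(2t)\le C\Phi_\beta(t)$.
\item For a cleaner argument I would use the extrapolation-free approach: decompose $f$ Calderón--Zygmund style at height $\lambda$, with the bad part controlled on dilated cubes. The weighted measure of those dilated cubes is bounded by $w(\{Mf>c\lambda\})$ via the doubling of $w$.
\end{enumerate}
The constant depends on $[w]_{A_1^{(\beta)}}$ only through the $A_\infty$ and doubling constants, which is consistent with the form of the upper bound for $C$ in the statement. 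The case $|Tf|\le T^*f$ (almost everywhere, for $f\in L^\infty_c$) follows immediately.

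\emph{Necessity.} Fix a cube $Q=Q(y_0,r)$, a set $E\subset Q$ and $f=\ind_E$. By \eqref{eq:nondegenerate} there is $x_0$ with $|x_0-y_0|\ge Ar$ (I apply the condition at radius $Ar$) such that both $|K(x_0,y_0)|$ and $|K(y_0,x_0)|$ are at least $c_0(Ar)^{-n}$. Let $\tilde Q=Q(x_0,r)$.
\begin{enumerate}[label=(\roman*)]
\item By the smoothness condition \eqref{eq:smooth} and $A$ large, $K(x,y)$ has essentially constant argument and modulus $\gtrsim (Ar)^{-n}$ for $x\in\tilde Q$, $y\in Q$. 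Hence for $x\in\tilde Q$,
$$
 |Tf(x)|\gtrsim A^{-n}\frac{|E|}{|Q|}.
$$
Taking $\lambda\simeq A^{-n}|E|/|Q|$, the estimate \eqref{eq:CZ} yields
$$
 w(\tilde Q)\le C\,\Phi_\beta\!\left(c\frac{|Q|}{|E|}\right)w(E).
$$
\item The same argument with the roles of $Q$ and $\tilde Q$ swapped uses the lower bound on $K(y,x)$, which is exactly why \eqref{eq:nondegenerate} demands both orders at the same pair of points. It gives, for $F\subset\tilde Q$,
$$
 w(Q)\le C\,\Phi_\beta\!\left(c\frac{|Q|}{|F|}\right)w(F).
$$
\item Chaining the two inequalities with $E=Q$ and $F=\tilde Q$ shows $w(Q)\simeq w(\tilde Q)$ up to $C$. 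Therefore
$$
 w(Q)\lesssim C^2\,\Phi_\beta\!\left(\frac{|Q|}{|E|}\right)w(E)\qquad\text{for all }E\subset Q.
$$
This is the same testing condition that arises when proving necessity in Theorem~\ref{thm:maximal} with $\alpha=0$, with constant $C^2$.
\item I then rerun that derivation. Take $E=\{x\in Q:w<t\}$ and combine $w(E)\le t|E|$ with the layer-cake formula to bound
$$
 \left(\frac{1}{|Q|}\int_Qw^{-1/\varepsilon}\dx\right)^{\varepsilon}.
$$
The $\log^\beta$ factor integrates to produce $\varepsilon^{-\beta}$, which gives $[w]_{A_1^{(\beta)}}\lesssim C^2$.
\end{enumerate}

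The hardest part is step (i) of the necessity argument: making the lower bound on $|Tf|$ uniform on $\tilde Q$ (choosing $A$ so that the smoothness error is below $c_0/2$). A second difficulty is checking that this testing inequality really reproduces the $\varepsilon^{-\beta}$ growth with polynomial dependence on $C$.
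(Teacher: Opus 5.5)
Your necessity argument is the same as the paper's. The paper also tests $\ind_{Q'}$ on $Q$ and $\ind_E$ on $Q'$, using both kernel orders at the same pair of centers, and chains
\[
 w(Q)\le B\,\Phi_\beta\Bigl(\frac{2}{c}\Bigr)w(Q')
 \le B^2\,\Phi_\beta\Bigl(\frac{2}{c}\Bigr)\Phi_\beta\Bigl(\frac{2}{c\rho}\Bigr)w(E).
\]
It then cites the converse half of Proposition~\ref{lem:growth} instead of redoing the layer-cake computation of your step~(iv).

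Sufficiency is where you take a different route. The paper never writes a good-$\lambda$ inequality. It quotes the Coifman--Fefferman bound $\int(T^*f)^s v\lesssim\int(Mf)^s v$ for all $v\in A_\infty$ and feeds it into the extrapolation theorem \cite[Theorem~3.1]{CGMP} with $\psi(t)=1/\Phi_\beta(1/t)$. This yields the weak modular comparison \eqref{eq:comparison}, which it combines with Corollary~\ref{cor:maximal} and \eqref{eq:submultiplicative}. Your good-$\lambda$ inequality plus iteration proves that special case of \eqref{eq:comparison} by hand. It is more elementary and avoids checking the doubling and quasi-convexity hypotheses on $\psi$. The paper's route is shorter and reuses the same black box for the commutator comparison \eqref{eq:comparisoncomm}.

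Two small points in your sufficiency sketch need attention.
\begin{enumerate}[label=(\alph*)]
\item In (iii), finiteness of $w(\{T^*f>\lambda\})$ for each $\lambda$ is not what closes the iteration. After $N$ steps the remainder is $(C\gamma^\delta)^N w(\{T^*f>2^{-N}\lambda\})$. You need a growth bound such as $w(\{T^*f>s\})\le s^{-2}\|T^*f\|_{L^2(w)}^2$, which holds since $w\in A_2$ and $f\in L^\infty_c$. You also need $\gamma$ small enough that $4C\gamma^\delta<1$. The main terms $(C\gamma^\delta)^j\,w(\{Mf>\gamma2^{-j-1}\lambda\})$ are then summable by Corollary~\ref{cor:maximal} and \eqref{eq:dilation}.
\item Step (iv) is unnecessary and, as stated, incomplete. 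The Calder\'on--Zygmund decomposition controls $w(\bigcup_j 2Q_j)$ by doubling. However, the good part and the bad part off $\bigcup_j 2Q_j$ are exactly where the classical argument uses the $A_1$ condition, which $w$ need not satisfy. Drop (iv) and rely on (i)--(iii).
\end{enumerate}
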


For real $b\in\BMO(\R^n)$, write
$$
 \|b\|_{\BMO}=\sup_Q\left(\frac{1}{|Q|}\int_{Q}|b-b_Q|\dx\right),\qquad
 T_b^0=T,\qquad T_b^m=[b,T_b^{m-1}],\quad m\ge1.
$$
Here $[b,T]f=bTf-T(bf)$. Away from the support of $f$,
the kernel formula is
\begin{equation}\label{eq:commkernel}
 T_b^mf(x)=\int_{\R^n}(b(x)-b(y))^mK(x,y)f(y)\,dy.
\end{equation}
By the generalized H\"older inequality and the
John--Nirenberg inequality, we have
$$
 \frac{1}{|Q|}\int_Q|b(x)-b_Q|\,|f(x)|\dx
 \le C_n\|b\|_{\BMO}\|f\|_{L\log L,Q}.
$$
P\'erez \cite{Perez} established the $L\log L$ endpoint for
first-order commutators; see also
\cite{PerezTrujillo} for multilinear commutators. Higher commutators are compared
with $M^{m+1}$, equivalently $M_{L(\log L)^m}$;
see \cite{CGMP,LMRT}. Quantitative commutator bounds also
appear in \cite{CPP,LOR}.

\begin{theorem}\label{thm:commutator}
Let $\beta\ge0$, let $m\ge1$ be an integer, and let $T$ be
a Calder\'on--Zygmund operator satisfying
\eqref{eq:nondegenerate}. Then $w\in A_1^{(\beta)}$ if and
only if there is a constant $C<\infty$ such that
\begin{equation}\label{eq:commutator}
 w(\{|T_b^mf|>\lambda\})\le C
 \int_{\R^n}\Phi_{\beta+m}
       \left(\frac{\|b\|_{\BMO}^m|f|}{\lambda}\right)w\dx
\end{equation}
for every real $b\in\BMO$, with $C$ independent of $b$.
If $C$ is the least such constant, then
$$
 [w]_{A_1^{(\beta)}}\le C_{n,T,\beta,m}C^2,
 \qquad C\le C\bigl(n,T,\beta,m,[w]_{A_1^{(\beta)}}\bigr).
$$
\end{theorem}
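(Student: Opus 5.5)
Both directions of Theorem~\ref{thm:commutator} can be reduced to Theorem~\ref{thm:maximal} and to the argument behind Theorem~\ref{thm:CZ}.

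\emph{Sufficiency.} Assume $w\in A_1^{(\beta)}$. By homogeneity we may take $\|b\|_{\BMO}=1$. We run the standard Pérez--Pradolini sparse/good-$\lambda$ scheme for higher commutators, but with the estimates for $M$ replaced by the Orlicz estimate \eqref{eq:orliczmaximal}.

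The first step is a Calderón--Zygmund decomposition of $f$ at height $\lambda$ adapted to $\Phi_m$. Write $f=g+h$, where $h=\sum_j h_j$ has mean-zero pieces on the cubes $Q_j$. The cubes are the maximal dyadic cubes with $\|f\|_{L(\log L)^m,Q}>\lambda$, so that $\bigcup Q_j\subset\{M_{L(\log L)^m}f>c\lambda\}$.

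By Theorem~\ref{thm:maximal} with $\alpha=m$, the weighted measure of $\bigcup 2Q_j$ is at most $C[w]_{A_1^{(\beta)}}\int\Phi_{m+\beta}(|f|/\lambda)\,w$. This is exactly the right-hand side of \eqref{eq:commutator}.

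For the good part, $\|g\|_\infty\lesssim\lambda$. We then use the $L^p(w)$ bound for $T_b^m$ with $p=1+\varepsilon$. This bound holds because $w\in A_p$, and its constant is polynomial in $[w]_{A_p}\le\varepsilon^{-\beta}[w]_{A_1^{(\beta)}}$ with a power depending on $m$. Chebyshev's inequality then gives a bound of the form $\varepsilon^{-N}\int(|f|/\lambda)^{1+\varepsilon}w$ on $\{|f|\le\lambda\}$.

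Optimizing $\varepsilon$ pointwise is not directly possible. Instead, we split $f$ into level sets $\{2^k\lambda<|f|\le2^{k+1}\lambda\}$ and choose $\varepsilon_k\sim1/k$ on each. Summing converts the factors $\varepsilon_k^{-N}$ into logarithmic powers. This layer-cake optimization is the mechanism converting polynomial growth $\varepsilon^{-\beta}$ into $\log^\beta$.

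For the bad part, off $\bigcup 2Q_j$ we expand $(b(x)-b(y))^m$ binomially around $b_{Q_j}$. The terms in which all factors involve $b(y)-b_{Q_j}$ are controlled through the smoothness \eqref{eq:smooth} together with the generalized Hölder inequality. This uses $\|(b-b_{Q_j})^k h_j\|_{L^1}\lesssim |Q_j|\,\|h_j\|_{L(\log L)^k,Q_j}$, with the $\exp L^{1/k}$ duality coming from John--Nirenberg. The remaining terms are of the form $(b(x)-b_{Q_j})^{m-k}\,T^{k}_b(\cdots)$ and are handled by induction on $m$, using Theorem~\ref{thm:CZ} as the base case.

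This gives $C\le C(n,T,\beta,m,[w]_{A_1^{(\beta)}})$.

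\emph{Necessity.} Assume \eqref{eq:commutator} holds. We aim to reproduce the argument proving $[w]_{A_1^{(\beta)}}\lesssim C^2$ in Theorem~\ref{thm:CZ}, using the special symbol $b(y)=\log|y-x_0|$, or a truncated linear symbol, to recover a kernel lower bound.

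Fix a cube $Q$ with center $y_0$ and side $r$. By \eqref{eq:nondegenerate} there is a point $x$ at distance $\gtrsim r$ with the stated lower kernel bound; let $\tilde Q$ be the cube of side $r$ centered there. Take $b$ to be a bounded-BMO-norm function equal to $0$ on $Q$ and $1$ on $\tilde Q$; a smoothed truncated log-distance built so that $\|b\|_{\BMO}\lesssim1$ works. Then for $f\ge0$ supported in $Q$ and $x\in\tilde Q$,
\[
|T_b^mf(x)|=\Bigl|\int_Q K(x,y)f(y)\,dy\Bigr|\gtrsim r^{-n}\int_Q f .
\]
Here the smoothness of $K$ keeps the sign and size stable over the cubes. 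The same construction applies with the roles of $Q$ and $\tilde Q$ reversed, thanks to the two-sided condition \eqref{eq:nondegenerate}.

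Applying \eqref{eq:commutator} with $f=\ind_E$ for $E\subset Q$ and $\lambda\simeq|E|/|Q|$ yields
\[
w(\tilde Q)\lesssim C\,\Phi_{\beta+m}\bigl(|Q|/|E|\bigr)\,w(E).
\]
Applying it again with $Q$ and $\tilde Q$ swapped gives the comparability $w(Q)\lesssim C\,w(\tilde Q)$, which accounts for the $C^2$.

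This leaves a restricted weak-type inequality with $\Phi_{\beta+m}$, whereas we need the sharper power $\beta$. \textbf{This is the main obstacle.} It must be resolved by choosing $b$ with $|b(x)-b(y)|\sim1$ only on $\tilde Q\times Q$, and by testing with $\lambda$ scaled by $\|b\|_{\BMO}^m$. Then the extra $\log^m$ is absorbed, because on $\tilde Q$ the commutator behaves like $T$ applied to $f$.

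Concretely, I would first try to obtain, exactly as for $T$ in Theorem~\ref{thm:CZ}, the inequality
\[
w(\tilde Q)\lesssim C\int_Q\Phi_\beta(f/\lambda)\,w
\]
up to the $\Phi_m$ factor, which is harmless when $f/\lambda\lesssim1$ on most of $Q$. The plan is to exploit the freedom to choose the test functions $f$ and levels $\lambda$ in the CZ proof so that $f/\lambda$ stays bounded. From there, duality with $w^{-1/\varepsilon}$ gives $\varepsilon^\beta[w]_{A_{1+\varepsilon}}\lesssim C^2$, as in the maximal case.
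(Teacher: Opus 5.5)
\textbf{Necessity.} This direction has a genuine gap. It stops at the obstacle you flag, and neither remedy you propose removes it.

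With any symbol satisfying $|b(x)-b(y)|\simeq1$ on $\tilde Q\times Q$, you get $|T_b^m\ind_E|\simeq\rho$ on $\tilde Q$. Testing \eqref{eq:commutator} then gives $w(Q)\lesssim C^2\Phi_{\beta+m}(1/\rho)\,w(E)$, and Proposition~\ref{lem:growth} yields only $w\in A_1^{(\beta+m)}$.

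Rescaling $b$ cannot help, because \eqref{eq:commutator} is invariant under $b\mapsto tb$, $\lambda\mapsto t^m\lambda$. Keeping $f/\lambda$ bounded cannot help either. If $0\le f\le K\lambda$ is supported in $E$, then for such $b$ one has $|T_b^mf|\lesssim K\lambda\rho<\lambda$ on $\tilde Q$ once $\rho\ll1/K$, so the test is vacuous. A set of small density is detected only when $f/\lambda\gtrsim\rho^{-1}$ on average over $E$, and at that size $\Phi_{\beta+m}$ exceeds $\Phi_\beta$ by the factor $[\log(e/\rho)]^m$.

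The missing idea is to enlarge the left-hand side by the same factor, using a symbol adapted to $E$. Take $b=C_n^{-1}\log M\ind_E$.
\begin{itemize}
\item By \eqref{eq:CR}, $\|b\|_{\BMO}\le1$.
\item $b=0$ a.e.\ on $E$, and $M\ind_E\lesssim_nR^{-n}\rho$ on $\tilde Q$. Hence $(b(x)-b(y))^m=b(x)^m$ on $\tilde Q\times E$, with fixed sign and size $\gtrsim[\log(e/\rho)]^m$ for $R$ large.
\item Therefore $|T_b^m\ind_E|\gtrsim\rho[\log(e/\rho)]^m\simeq\|\ind_E\|_{L(\log L)^m,Q}$ on $\tilde Q$.
\item Testing at this level costs only $\Phi_{\beta+m}(1/\lambda)\lesssim\rho^{-1}[\log(e/\rho)]^\beta$.
\end{itemize}
Combined with your companion bound $w(Q)\lesssim C\,w(\tilde Q)$, this is \eqref{eq:densityD} with $D\simeq C^2$. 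The paper runs the same mechanism for general $h$, with $b=C_n^{-1}\log M[(e+h/h_Q)\ind_Q]$ and a case split. That yields the local test \eqref{eq:localtest} for $M_{L(\log L)^m}$, and it concludes via Lemma~\ref{lem:covering} and Theorem~\ref{thm:maximal} with $\alpha=m$.

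\textbf{Sufficiency.} Your Calder\'on--Zygmund route differs from the paper's, but it does not address the real difficulty: $w\notin A_1$ in general.
\begin{itemize}
\item The good part needs no $\varepsilon$-optimization. Since $|g|\lesssim\lambda$, the layers $\{2^k\lambda<|f|\}$ never occur, and the $L^2(w)$ bound with $[w]_{A_2}\le[w]_{A_1^{(\beta)}}$ suffices. Letting $p\downarrow1$ would in any case fail. The sharp $L^{1+\varepsilon}(w)$ bound for $T_b^m$ carries the exponent $(m+1)/\varepsilon$ on $[w]_{A_{1+\varepsilon}}$, so your claimed polynomial dependence is false.
\item In the bad part, the classical tail estimate off $\bigcup_j2Q_j$ uses $Mw\lesssim[w]_{A_1}w$ on $Q_j$. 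That fails here, e.g.\ for $w_\delta$ in \eqref{eq:exampleweight} with $\delta=\beta$, whose essential infimum vanishes on cubes meeting $\{x_1=0\}$.
\item The term $T((b-b_{Q_j})^mh_j)$ has no cancellation, so \eqref{eq:smooth} cannot control it.
\item The $j$-dependent factors $(b(x)-b_{Q_j})^{m-k}$ block a direct appeal to an induction hypothesis.
\end{itemize}
The paper bypasses the decomposition entirely. The modular Coifman--Fefferman comparison \eqref{eq:comparisoncomm}, valid for $w\in A_2$, reduces $T_b^m$ to $M_{L(\log L)^m}$. Theorem~\ref{thm:maximal} with $\alpha=m$, together with \eqref{eq:submultiplicative}, then finishes the argument.
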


The sufficient estimates in Theorems~\ref{thm:CZ}
and~\ref{thm:commutator} hold for every Calder\'on--Zygmund
operator satisfying \eqref{eq:size}--\eqref{eq:smooth}.
We do not claim optimal dependence on the characteristic in
these two estimates.

Section~\ref{sec:properties} contains the properties of
$A_1^{(\beta)}$ and the strict inclusion examples.
Section~\ref{sec:proofs} collects the auxiliary estimates
and proves the main theorems.
The notation $X\lesssim Y$ means $X\le cY$; subscripts
indicate the dependence of $c$. We write $X\simeq Y$ if
both inequalities hold.

\section{Properties of the weight class}\label{sec:properties}

We first record the inclusions between the classes
$A_1^{(\beta)}$ and give examples showing when they are strict.
We then consider products and powers, and give a local
characterization in terms of subsets of cubes.

The classes $A_1^{(\beta)}$ increase with $\beta$. The next
proposition records the inclusions and the corresponding
bounds for the $A_p$ characteristics.

\begin{proposition}\label{prop:basic}
Let $\beta\ge0$ and $w\in A_1^{(\beta)}$. Then $w\in A_p$
for every $p>1$, and
\begin{equation}\label{eq:Apcontrol}
 [w]_{A_p}\le
 \begin{cases}
 (p-1)^{-\beta}[w]_{A_1^{(\beta)}},&1<p\le2,\\[3pt]
 [w]_{A_1^{(\beta)}},&2<p<\infty.
 \end{cases}
\end{equation}
If $\gamma\ge\beta$, then $w\in A_1^{(\gamma)}$ and
\begin{equation}\label{eq:inclusions}
 [w]_{A_1^{(\gamma)}}\le[w]_{A_1^{(\beta)}}.
\end{equation}
Moreover, $A_1^{(0)}=A_1$, with equality of the characteristics.
\end{proposition}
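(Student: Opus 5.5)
The whole argument rests on the monotonicity $[w]_{A_q}\le[w]_{A_p}$ for $1<p\le q$, recalled in the introduction, combined with the definition \eqref{eq:class}.

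For $1<p\le2$, put $\varepsilon=p-1\in(0,1]$. The definition gives
$$\varepsilon^\beta[w]_{A_{1+\varepsilon}}\le[w]_{A_1^{(\beta)}},$$
and dividing by $\varepsilon^\beta$ yields the first line of \eqref{eq:Apcontrol}.

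For $p>2$, monotonicity and the case $\varepsilon=1$ of the definition give
$$[w]_{A_p}\le[w]_{A_2}=1^\beta[w]_{A_2}\le[w]_{A_1^{(\beta)}}.$$
In particular every $[w]_{A_p}$ is finite, so $w\in A_p$ for all $p>1$.

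For \eqref{eq:inclusions}, take $\gamma\ge\beta$ and $0<\varepsilon\le1$. Then $\varepsilon^{\gamma}=\varepsilon^{\gamma-\beta}\varepsilon^\beta\le\varepsilon^\beta$, so
$$\varepsilon^\gamma[w]_{A_{1+\varepsilon}}\le\varepsilon^\beta[w]_{A_{1+\varepsilon}}.$$
Taking the supremum over $\varepsilon\in(0,1]$ gives $[w]_{A_1^{(\gamma)}}\le[w]_{A_1^{(\beta)}}$, which is \eqref{eq:inclusions}.

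For $A_1^{(0)}=A_1$, note that $[w]_{A_1^{(0)}}=\sup_{0<\varepsilon\le1}[w]_{A_{1+\varepsilon}}$. The introduction already established the identity $[w]_{A_1}=\sup_{0<\varepsilon\le1}[w]_{A_{1+\varepsilon}}$ in $[1,\infty]$. This gives equality of the characteristics, and hence equality of the classes.

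If that identity is to be re-derived in full, this is the only step needing care. The inequality $\le$ for $[w]_{A_1}$ comes from the fact that $\bigl(|Q|^{-1}\int_Q w^{-1/\varepsilon}\bigr)^{\varepsilon}\uparrow\|w^{-1}\|_{L^\infty(Q)}$ as $\varepsilon\downarrow0$, applied cube by cube; this holds even when the limit is infinite. The inequality $\ge$ comes from Hölder's inequality, which gives $[w]_{A_{1+\varepsilon}}\le[w]_{A_1}$ for each $\varepsilon$.
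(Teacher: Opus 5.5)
Your proposal is correct and follows the paper's proof essentially step for step. You use $\varepsilon=p-1$ for $1<p\le2$, monotonicity plus $\varepsilon=1$ for $p>2$, and $\varepsilon^\gamma\le\varepsilon^\beta$ on $(0,1]$ for the inclusions; for $\beta=0$ you use convergence of the normalized $L^{1/\varepsilon}$ norms of $w^{-1}$ to $\|w^{-1}\|_{L^\infty(Q)}$, which the paper writes as $\sup_Q\sup_{q\ge1}\left[\frac{1}{|Q|}\int_Q\left(\frac{w_Q}{w}\right)^q\dx\right]^{1/q}=\sup_Q\left\|\frac{w_Q}{w}\right\|_{L^\infty(Q)}$.
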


\begin{proof}
The substitution $q=\frac{1}{\varepsilon}$ gives
\begin{equation}\label{eq:moment}
 \begin{split}
 [w]_{A_{1+\frac{1}{q}}}
 &=\sup_Q\left[\frac{1}{|Q|}\int_Q
       \left(\frac{w_Q}{w(x)}\right)^q\dx\right]^{\frac{1}{q}},
       \qquad q\ge1,\\
 [w]_{A_1^{(\beta)}}
 &=\sup_Q\sup_{q\ge1}\frac{1}{q^\beta}
       \left[\frac{1}{|Q|}\int_Q
       \left(\frac{w_Q}{w(x)}\right)^q\dx\right]^{\frac{1}{q}}.
 \end{split}
\end{equation}
For $\beta=0$, the normalized $L^q$ norms increase to the
essential supremum, so
\begin{equation}\label{eq:beta0}
 [w]_{A_1^{(0)}}=
 \sup_Q\left\|\frac{w_Q}{w}\right\|_{L^\infty(Q)}
 =[w]_{A_1}.
\end{equation}
The monotonicity in $\beta$ follows from
$\varepsilon^\gamma\le\varepsilon^\beta$ on $(0,1]$.
For $1<p\le2$, put $\varepsilon=p-1$ in \eqref{eq:class}.
For $p\ge2$, monotonicity gives
$[w]_{A_p}\le[w]_{A_2}\le[w]_{A_1^{(\beta)}}$.
\end{proof}

We next give examples for which the growth in
\eqref{eq:Apcontrol} is attained. For $\delta>0$, consider the weight
\begin{equation}\label{eq:exampleweight}
 w_\delta(x)=\bigl[1+\log^+\left(\frac{1}{|x_1|}\right)\bigr]^{-\delta},
 \qquad x_1\ne0,
\end{equation}
where $\log^+t=\max\{0,\log t\}$; its value on $\{x_1=0\}$
may be chosen arbitrarily in $(0,\infty)$. We claim that
\begin{equation}\label{eq:examplegrowth}
 [w_\delta]_{A_{1+\varepsilon}}\simeq_\delta
 \varepsilon^{-\delta},\qquad 0<\varepsilon\le1.
\end{equation}
Since $w_\delta$ depends only on $x_1$, its characteristic
over a cube equals the one-dimensional characteristic over
the first coordinate interval. It suffices to work on $\R$.
Set
$$
 L(t)=1+\log^+\left(\frac{1}{t}\right),\qquad t>0,\qquad q=\frac{1}{\varepsilon}\ge1.
$$
Then $w_\delta(t)=L(|t|)^{-\delta}$. For $r>0$ and $s\ge0$,
$$
 L(re^{-s})\le L(r)+s\le L(r)(1+s).
$$
The substitution $t=re^{-s}$ yields
\begin{equation}\label{eq:examplemoment}
 \frac{1}{r}\int_0^rL(t)^{\delta q}\,dt
 \le L(r)^{\delta q}
                   \int_0^\infty(1+s)^{\delta q}e^{-s}\,ds
 \le C_\delta^q q^{\delta q}L(r)^{\delta q}.
\end{equation}
The last bound follows from $(1+s)^{\delta q}\le2^{\delta q}(1+s^{\delta q})$ and
$$
 \int_0^\infty s^{\delta q}e^{-s}\,ds\le2\left(\frac{2\delta q}{e}\right)^{\delta q}.
$$
Taking a $q$th root gives a
constant depending only on $\delta$.

Let $I$ be an interval of length $\ell$ and
$d=\operatorname{dist}(I,0)$. If $d\ge\ell$, the interval lies on one side of zero and
$d\le|x|\le d+\ell$ for $x\in I$.
Since $d+\ell\le2d$,
$$
 L(d)\le L(d+\ell)+\log2
       \le(1+\log2)L(d+\ell).
$$
It follows that
$$
\frac{1}{|I|}\int_Iw_\delta(x)\dx
      \left(\frac{1}{|I|}\int_{I}w_\delta^{-q}\dx\right)^{\frac{1}{q}}
 \le\frac{\operatorname*{ess\,sup}_I w_\delta}{\operatorname*{ess\,inf}_I w_\delta}
 \le(1+\log2)^\delta.
$$
If $d<\ell$, then $I\subset(-2\ell,2\ell)$.
Monotonicity of $L$ gives $(w_\delta)_I\le L(2\ell)^{-\delta}$,
and \eqref{eq:examplemoment} gives
\begin{align*}
\frac{1}{|I|}\int_Iw_\delta(x)\dx
       \left(\frac{1}{|I|}\int_{I}w_\delta^{-q}\dx\right)^{\frac{1}{q}}
 &\le L(2\ell)^{-\delta}
       \left(\frac{2}{\ell}\int_0^{2\ell}L(t)^{\delta q}\,dt\right)^{\frac{1}{q}}\\
 &\le4^{\frac{1}{q}}C_\delta q^\delta\le4C_\delta q^\delta.
\end{align*}
Both cases prove the upper bound uniformly over all intervals.

For the lower bound take $I=(0,1)$ and write
$$
 a_\delta=\int_0^1[1+\log\left(\frac{1}{t}\right)]^{-\delta}\,dt>0.
$$
The same substitution gives
\begin{align*}
 \int_0^1[1+\log\left(\frac{1}{t}\right)]^{\delta q}\,dt
 &=\int_0^\infty(1+s)^{\delta q}e^{-s}\,ds\\
 &\ge\int_q^{q+1}(1+s)^{\delta q}e^{-s}\,ds
 \ge(1+q)^{\delta q}e^{-(q+1)}.
\end{align*}
Therefore
$$
 [w_\delta]_{A_{1+\frac{1}{q}}}
 \ge a_\delta(1+q)^\delta e^{-1-\frac{1}{q}}
 \ge e^{-2}a_\delta q^\delta.
$$
This proves \eqref{eq:examplegrowth}. The upper bound for
$1<p\le2$, followed by monotonicity for $p>2$, gives
$w_\delta\in\bigcap_{p>1}A_p$. On any interval containing
zero the essential infimum of $w_\delta$ is zero, so
$w_\delta\notin A_1$. Finally,
$$
 \sup_{0<\varepsilon\le1}
       \varepsilon^\beta[w_\delta]_{A_{1+\varepsilon}}
 \simeq_\delta
       \sup_{0<\varepsilon\le1}\varepsilon^{\beta-\delta},
$$
which is finite exactly when $\delta\le\beta$.
It follows that
$$
 w_\delta\in A_1^{(\beta)}\quad\Longleftrightarrow\quad
 \delta\le\beta.
$$
Taking $\delta=\beta$ and $\delta>\beta$, respectively, gives
\begin{equation}\label{eq:strictinclusions}
 A_1\subsetneq A_1^{(\beta)}
       \subsetneq\bigcap_{p>1}A_p,\qquad \beta>0.
\end{equation}
Also, $A_1^{(\beta)}\subsetneq A_1^{(\gamma)}$ whenever
$0\le\beta<\gamma$.

For $q\ge1$, the duality of Muckenhoupt characteristics gives
\begin{equation}\label{eq:dualgrowth}
 \begin{aligned}
 [w^{-q}]_{A_{1+q}}^{\frac{1}{q}}
 &=[w]_{A_{1+\frac{1}{q}}},\qquad q\ge1,\\
 [w]_{A_1^{(\beta)}}
 &=\sup_{q\ge1}q^{-\beta}[w^{-q}]_{A_{1+q}}^{\frac{1}{q}}.
 \end{aligned}
\end{equation}
Indeed, both sides of the first identity, raised to the
$q$th power, equal $\sup_Q\left(\frac{1}{|Q|}\int_{Q}w^{-q}\dx\right)(w_Q)^q$.

H\"older's inequality gives a product property for these
classes. Taking one of the weights to be constant then gives
the result for powers between zero and one. For powers greater
than one, we also use the reverse H\"older inequality; the
allowable range is determined by the weight characteristic.

\begin{proposition}\label{prop:powers}
Let $w_i\in A_1^{(\beta_i)}$, $i=0,1$, and $0\le\theta\le1$.
Then
\begin{equation}\label{eq:geometric}
 [w_0^{1-\theta}w_1^\theta]_
       {A_1^{((1-\theta)\beta_0+\theta\beta_1)}}
 \le[w_0]_{A_1^{(\beta_0)}}^{1-\theta}
       [w_1]_{A_1^{(\beta_1)}}^\theta.
\end{equation}
In particular, if $w\in A_1^{(\beta)}$ and $0<r\le1$, then
$w^r\in A_1^{(r\beta)}$ with characteristic at most
$[w]_{A_1^{(\beta)}}^r$. There is a dimensional constant
$c_n$ such that, if $A=[w]_{A_1^{(\beta)}}$ and
$1<r\le1+(c_nA)^{-1}$, then $w^r\in A_1^{(r\beta)}$ and
\begin{equation}\label{eq:powerup}
 [w^r]_{A_1^{(r\beta)}}\le2^rr^{r\beta}A^r.
\end{equation}
\end{proposition}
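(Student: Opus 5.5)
The plan is to work throughout with the moment form \eqref{eq:moment} of the characteristic:
$$
 [w]_{A_1^{(\beta)}}=\sup_Q\sup_{q\ge1}q^{-\beta}\,w_Q\Bigl(\frac{1}{|Q|}\int_Q w^{-q}\dx\Bigr)^{\frac1q}.
$$
We then estimate the two factors $w_Q$ and $\bigl(\frac{1}{|Q|}\int_Qw^{-q}\bigr)^{1/q}$ separately, for a fixed cube $Q$ and a fixed $q\ge1$. For the geometric mean estimate \eqref{eq:geometric}, put $w=w_0^{1-\theta}w_1^\theta$. We may assume $0<\theta<1$, since the endpoints are trivial. Hölder's inequality with exponents $\frac{1}{1-\theta}$ and $\frac1\theta$ gives $w_Q\le (w_0)_Q^{1-\theta}(w_1)_Q^{\theta}$. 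The same exponents applied to $w_0^{-(1-\theta)q}w_1^{-\theta q}$ give
$$
 \Bigl(\frac{1}{|Q|}\int_Q w^{-q}\Bigr)^{\frac1q}\le\Bigl(\frac{1}{|Q|}\int_Q w_0^{-q}\Bigr)^{\frac{1-\theta}{q}}\Bigl(\frac{1}{|Q|}\int_Q w_1^{-q}\Bigr)^{\frac{\theta}{q}}.
$$
Multiplying the two bounds, the product is at most $[w_0]_{A_{1+1/q}}^{1-\theta}[w_1]_{A_{1+1/q}}^{\theta}\le (q^{\beta_0}[w_0]_{A_1^{(\beta_0)}})^{1-\theta}(q^{\beta_1}[w_1]_{A_1^{(\beta_1)}})^{\theta}$. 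Dividing by $q^{(1-\theta)\beta_0+\theta\beta_1}$ and taking suprema over $Q$ and $q$ yields \eqref{eq:geometric}. The case $0<r\le1$ follows by taking $w_1=w$, $\theta=r$, and $w_0\equiv1$, $\beta_0=0$, using that $[1]_{A_1^{(0)}}=1$.

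For $r>1$ the first factor can no longer be handled by Hölder. I would use the sharp reverse Hölder inequality for $A_\infty$ weights, as in Hytönen--Pérez: there is $\tau_n$ such that $\frac{1}{|Q|}\int_Qw^r\le2(w_Q)^r$ whenever $1<r\le1+(\tau_n[w]_{A_\infty})^{-1}$, where $[w]_{A_\infty}$ is the Fujii--Wilson constant. Since $[w]_{A_\infty}\le c_n[w]_{A_2}$, and since $[w]_{A_2}\le A$ by Proposition~\ref{prop:basic} with $\varepsilon=1$, this range contains $1<r\le1+(c_nA)^{-1}$ for a suitable dimensional $c_n$.

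The second factor is handled by rescaling the exponent:
$$
 \Bigl(\frac{1}{|Q|}\int_Q w^{-rq}\Bigr)^{\frac1q}=\Bigl[\Bigl(\frac{1}{|Q|}\int_Q w^{-rq}\Bigr)^{\frac{1}{rq}}\Bigr]^r\le\bigl((rq)^\beta A\bigr)^r(w_Q)^{-r}.
$$
This uses \eqref{eq:moment} with exponent $rq\ge1$. Combining this with the reverse Hölder bound, the factors $(w_Q)^{\pm r}$ cancel. We obtain $(w^r)_Q\bigl(\frac{1}{|Q|}\int_Q w^{-rq}\bigr)^{1/q}\le2r^{r\beta}q^{r\beta}A^r$. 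Dividing by $q^{r\beta}$ and using $2\le2^r$ gives \eqref{eq:powerup}.

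The main obstacle is the reverse Hölder step. One needs a version whose constant is exactly $2$ and whose admissible range of exponents is controlled linearly by a characteristic dominated by $A$. I would quote the Hytönen--Pérez sharp reverse Hölder inequality together with the standard bound $[w]_{A_\infty}\lesssim_n[w]_{A_2}$. If a self-contained argument is preferred, I would instead run the usual Calderón--Zygmund stopping-time proof of reverse Hölder, using only the $A_2$ bound.
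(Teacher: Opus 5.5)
Your proposal is correct and follows the paper's proof essentially step for step. For the geometric mean you apply H\"older's inequality to both factors, taking $w_0\equiv1$, $\beta_0=0$ for $0<r\le1$. For $r>1$ you use the Hyt\"onen--P\'erez sharp reverse H\"older inequality, made available by $[w]_{A_\infty}\lesssim_n[w]_{A_2}\le A$, and then rescale the exponent from $q$ to $rq$, which is the paper's passage from $\varepsilon$ to $\varepsilon/r$. The remaining differences are only bookkeeping: you work in the moment form rather than with $\varepsilon$, and your bound $2^r r^{r\beta}A^r$ holds whether the reverse H\"older constant is $2$ or $2^r$; the paper uses $\bigl(\frac{1}{|Q|}\int_Q w^r\bigr)^{1/r}\le 2w_Q$.
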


\begin{proof}
The cases $\theta=0,1$ are immediate. Otherwise H\"older's
inequality shows both local integrability of
$v=w_0^{1-\theta}w_1^\theta$ and
$$
 \begin{aligned}
 v_Q&\le(w_0)_Q^{1-\theta}(w_1)_Q^\theta,\\
 \frac{1}{|Q|}\int_Qv^{-\frac{1}{\varepsilon}}\dx
 &\le\left(\frac{1}{|Q|}\int_Qw_0^{-\frac{1}{\varepsilon}}\dx\right)^{1-\theta}
       \left(\frac{1}{|Q|}\int_Qw_1^{-\frac{1}{\varepsilon}}\dx\right)^\theta.
 \end{aligned}
$$
Multiplication and the definition of $A_1^{(\beta_i)}$ give
$$
 [v]_{A_{1+\varepsilon}}
 \le[w_0]_{A_1^{(\beta_0)}}^{1-\theta}
       [w_1]_{A_1^{(\beta_1)}}^\theta
       \varepsilon^{-((1-\theta)\beta_0+\theta\beta_1)},
$$
proving \eqref{eq:geometric}. Taking $w_0=1$, $\beta_0=0$,
$w_1=w$, and $\theta=r$ gives the assertion for $0<r\le1$.

For $r>1$, \eqref{eq:Apcontrol} gives $[w]_{A_2}\le A$.
The quantitative reverse H\"older inequality
\cite[Proposition~2.2 and Theorem~2.3]{HP} implies, after
fixing a dimensional $c_n$,
$$
 \left(\frac{1}{|Q|}\int_{Q}w^r\dx\right)^{\frac{1}{r}}\le2w_Q,
 \qquad1<r\le1+(c_nA)^{-1}.
$$
Thus $w^r$ is locally integrable. For $0<\varepsilon\le1$,
\begin{align*}
 [w^r]_{A_{1+\varepsilon}}
 &\le2^r\sup_Q(w_Q)^r
                   \left(\frac{1}{|Q|}\int_{Q}w^{-\frac{r}{\varepsilon}}\dx\right)^\varepsilon\\
 &=2^r[w]_{A_{1+\frac{\varepsilon}{r}}}^r
 \le2^r A^r\left(\frac{\varepsilon}{r}\right)^{-r\beta}.
\end{align*}
Multiplication by $\varepsilon^{r\beta}$ and a supremum prove
\eqref{eq:powerup}.
\end{proof}

The next proposition characterizes $A_1^{(\beta)}$ by a lower
bound for the weighted measure of subsets of a cube.

\begin{proposition}\label{lem:growth}
Let $\beta\ge0$. Then $w\in A_{1}^{(\beta)}$ implies
\begin{equation}\label{eq:density}
 \frac{w(E)}{w(Q)}\ge
 \frac{\rho}{e[w]_{A_1^{(\beta)}}[\log\left(\frac{e}{\rho}\right)]^\beta},
 \qquad\rho=\frac{|E|}{|Q|}>0,
\end{equation}
for every cube $Q$ and every measurable $E\subset Q$
with $|E|>0$. Conversely, if
\begin{equation}\label{eq:densityD}
 \frac{w(E)}{w(Q)}\ge
 \frac{\rho}{D[\log\left(\frac{e}{\rho}\right)]^\beta}
\end{equation}
holds for every cube $Q$ and every measurable $E\subset Q$
with $|E|>0$, with the same constant $D$, then
$w\in A_1^{(\beta)}$ and
$[w]_{A_1^{(\beta)}}\le C_\beta D$.
\end{proposition}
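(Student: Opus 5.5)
Both directions go through the moment form \eqref{eq:moment} of the characteristic. Write $A=[w]_{A_1^{(\beta)}}$, so that for every cube $Q$ and every $q\ge1$,
$$
 \left[\frac{1}{|Q|}\int_Q\left(\frac{w_Q}{w}\right)^q\dx\right]^{\frac1q}\le Aq^\beta .
$$

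\emph{Necessity.} Fix $Q$, a measurable $E\subset Q$ with $\rho=|E|/|Q|>0$, and an exponent $q\ge1$ to be chosen later. Apply H\"older's inequality to $\ind_E=(w^{1/q}\ind_E)\,(w^{-1/q})$ with exponents $\frac{q+1}{q}$ and $q+1$; equivalently, use Jensen's inequality as follows. Since $t\mapsto t^{-q}$ is convex,
$$
 \left(\frac{1}{|E|}\int_E\frac{w}{w_Q}\dx\right)^{-q}
 \le\frac{1}{|E|}\int_E\left(\frac{w_Q}{w}\right)^q\dx
 \le\frac{|Q|}{|E|}\,(Aq^\beta)^q .
$$
Since $w(Q)=|Q|w_Q$, this rearranges to
$$
 \frac{w(E)}{w(Q)}\ge\rho\cdot\rho^{1/q}\,(Aq^\beta)^{-1}.
$$
Now choose $q=\log(e/\rho)\ge1$. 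Then $\rho^{1/q}=e^{\log\rho/\log(e/\rho)}\ge e^{-1}$, because $-\log\rho\le\log(e/\rho)$. This gives exactly \eqref{eq:density}.

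\emph{Sufficiency.} Assume \eqref{eq:densityD}. Fix $Q$ and set $u=w_Q/w$ on $Q$. We bound the distribution of $u$ using the sets
$$
 E_s=\{x\in Q: u>s\},\qquad \rho_s=|E_s|/|Q|.
$$
On $E_s$ we have $w<w_Q/s$, so $w(E_s)\le\rho_s w(Q)/s$. Comparing with \eqref{eq:densityD} gives
$$
 \frac{\rho_s}{D[\log(e/\rho_s)]^\beta}\le\frac{\rho_s}{s},
 \qquad\text{that is,}\qquad
 \log(e/\rho_s)\ge(s/D)^{1/\beta}
$$
whenever $\rho_s>0$. Hence $\rho_s\le e\exp\bigl(-(s/D)^{1/\beta}\bigr)$, a stretched-exponential tail. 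For $\beta=0$ the same comparison shows instead that $\rho_s=0$ for $s>D$, so $u\le D$ a.e.\ and $[w]_{A_1}\le D$ directly.

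Integrate this tail bound via the layer-cake formula:
$$
 \frac{1}{|Q|}\int_Q u^q\dx=\int_0^\infty qs^{q-1}\rho_s\,ds
 \le e\int_0^\infty qs^{q-1}e^{-(s/D)^{1/\beta}}\,ds .
$$
Substitute $s=Dt^\beta$. The right-hand side becomes $eD^q\,\Gamma(\beta q+1)$. Stirling's bound gives $\Gamma(\beta q+1)^{1/q}\le C_\beta q^\beta$, and $e^{1/q}\le e$. Taking $q$th roots therefore yields
$$
 \left[\frac{1}{|Q|}\int_Q u^q\dx\right]^{\frac1q}\le C_\beta Dq^\beta,
$$
so $[w]_{A_1^{(\beta)}}\le C_\beta D$ by \eqref{eq:moment}.

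The main point needing care is the Gamma-function estimate. It must be uniform in $q\ge1$ with a constant depending only on $\beta$. This is the same kind of computation already done in \eqref{eq:examplemoment}, so I expect no real obstacle. The only other item to check is the degenerate case $\beta=0$, handled above.
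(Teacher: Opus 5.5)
Your proposal is correct and follows the paper's proof essentially step for step: for necessity, H\"older's inequality (your Jensen formulation is equivalent) with $q=1/\varepsilon=\log(e/\rho)$; for sufficiency, the stretched-exponential tail bound for $\rho_s$ integrated by the layer-cake formula into $\Gamma(\beta q+1)$, with $\beta=0$ handled separately. The only cosmetic difference is that you apply the tail bound on all of $(0,\infty)$, where it is trivially true for $s\le D$, instead of splitting the integral at $s=D$; this gives $eD^q\Gamma(\beta q+1)$ in place of $D^q[1+e\Gamma(\beta q+1)]$.
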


\begin{proof}
Let $E\subset Q$, with $\rho=\frac{|E|}{|Q|}>0$.
For $0<\varepsilon\le1$, H\"older's inequality gives
$$
 \begin{aligned}
 \rho
 &=\frac{1}{|Q|}\int_E
       w(x)^{\frac{1}{1+\varepsilon}}
       w(x)^{-\frac{1}{1+\varepsilon}}\dx\\
 &\le
 \left(\frac{w(E)}{|Q|}\right)^{\frac{1}{1+\varepsilon}}
 \left(\frac{1}{|Q|}\int_Qw(x)^{-\frac{1}{\varepsilon}}\dx
       \right)^{\frac{\varepsilon}{1+\varepsilon}}.
 \end{aligned}
$$
Raising both sides to the power $1+\varepsilon$ yields
\begin{equation}\label{eq:holderdensity}
 \rho^{1+\varepsilon}
 \le[w]_{A_{1+\varepsilon}}\frac{w(E)}{w(Q)}.
\end{equation}
Choose $\varepsilon=\frac{1}{\log\left(\frac{e}{\rho}\right)}$. Then
$$
 \rho^\varepsilon
 =\exp\left[-\frac{\log\left(\frac{1}{\rho}\right)}{1+\log\left(\frac{1}{\rho}\right)}\right]
 \ge e^{-1}.
$$
Substituting
$[w]_{A_{1+\varepsilon}}\le
[w]_{A_1^{(\beta)}}\varepsilon^{-\beta}$ into
\eqref{eq:holderdensity} proves \eqref{eq:density}.

Conversely, suppose that \eqref{eq:densityD} holds. If
$\beta=0$, fix $Q$ and $\eta>0$ and set
$$
 E=\left\{x\in Q:w(x)<\frac{w_Q}{D+\eta}\right\}.
$$
If $|E|>0$, then
$$
 \frac{|E|}{D|Q|}
 \le\frac{w(E)}{w(Q)}
 \le\frac{|E|}{(D+\eta)|Q|},
$$
which is impossible. Letting $\eta\downarrow0$, we obtain
$w(x)\ge\frac{w_Q}{D}$ for almost every $x\in Q$.
Thus $[w]_{A_1^{(0)}}=[w]_{A_1}\le D$.

Let $\beta>0$. For a fixed cube $Q$, write
$$
 E_s=\left\{x\in Q:\frac{w_Q}{w(x)}>s\right\},\qquad
 \rho_s=\frac{|E_s|}{|Q|}.
$$
Since $w(x)<\frac{w_Q}{s}$ on $E_s$,
$$
 \frac{w(E_s)}{w(Q)}\le\frac{\rho_s}{s}.
$$
If $\rho_s>0$, \eqref{eq:densityD} gives
$$
 \frac{\rho_s}{D[\log\left(\frac{e}{\rho_s}\right)]^\beta}
 \le\frac{\rho_s}{s}.
$$
Solving for $\rho_s$, and including the case $\rho_s=0$,
we obtain
\begin{equation}\label{eq:tail}
 \rho_s\le e\exp\left[-\left(\frac{s}{D}\right)^{\frac{1}{\beta}}\right],
 \qquad s>D.
\end{equation}
For $q\ge1$, integration of the distribution function and
the substitution $v=\left(\frac{s}{D}\right)^{\frac{1}{\beta}}$ give
\begin{align}
 \frac{1}{|Q|}\int_Q\left(\frac{w_Q}{w(x)}\right)^q\dx
 &=q\int_0^\infty s^{q-1}\rho_s\,ds\nonumber\\
 &\le D^q+eq\int_D^\infty s^{q-1}
       \exp\left[-\left(\frac{s}{D}\right)^{\frac{1}{\beta}}\right]\,ds
       \nonumber\\
 &\le D^q\left[1+e\beta q
                \int_0^\infty v^{\beta q-1}e^{-v}\,dv\right]
       \nonumber\\
 &=D^q\bigl[1+e\Gamma(\beta q+1)\bigr].
 \label{eq:momentcalculation}
\end{align}
The bound
$$
 \Gamma(r+1)=\int_0^\infty v^re^{-v}\,dv
 \le2\left(\frac{2r}{e}\right)^r,\qquad r>0,
$$
follows by maximizing $v^re^{-\frac{v}{2}}$ and integrating
$e^{-\frac{v}{2}}$. Hence
$$
 \left[\frac{1}{|Q|}\int_Q
       \left(\frac{w_Q}{w(x)}\right)^q\dx\right]^{\frac{1}{q}}
 \le C_\beta Dq^\beta.
$$
Taking the supremum over $Q$ and $q\ge1$ in
\eqref{eq:moment} proves
$[w]_{A_1^{(\beta)}}\le C_\beta D$.
\end{proof}

\section{Proofs of Theorems \ref{thm:maximal}, \ref{thm:CZ} and \ref{thm:commutator}}\label{sec:proofs}

We collect the elementary properties of $\Phi_\gamma$ and
the estimates for Orlicz maximal operators that will be used
in the proofs of the main theorems.

For $\gamma\ge0$, the function $\Phi_\gamma$ is convex,
$\Phi_\gamma(0)=0$, and $\Phi_\gamma(t)\ge t$. If $a\ge1$,
then
\begin{equation}\label{eq:dilation}
 \Phi_\gamma(at)\le a(1+\log a)^\gamma\Phi_\gamma(t),
 \qquad t\ge0.
\end{equation}
We shall also use the submultiplicative estimate
\begin{equation}\label{eq:submultiplicative}
 \Phi_\gamma(st)\le2^\gamma\Phi_\gamma(s)\Phi_\gamma(t),
 \qquad s,t\ge0.
\end{equation}
These follow from
$$
 \log(e+at)\le\log a+\log(e+t)
$$
and
$$
 \log(e+st)\le\log(e+s)+\log(e+t).
$$
Convexity gives the scaling inequality
\begin{equation}\label{eq:convexscale}
 \Phi_\gamma\left(\frac{t}{a}\right)\le a^{-1}\Phi_\gamma(t),
 \qquad a\ge1,\quad t\ge0.
\end{equation}
The Luxemburg averages are homogeneous and satisfy the
triangle inequality. For bounded $h$ on $Q$,
\begin{equation}\label{eq:normtest}
 \|h\|_{L(\log L)^\alpha,Q}>\lambda
 \quad\Longleftrightarrow\quad
 \frac{1}{|Q|}\int_Q\Phi_\alpha\left(\frac{|h(x)|}{\lambda}\right)\dx>1.
\end{equation}
For $h\not\equiv0$ on $Q$, the local $L(\log L)^\alpha$
norm has the following equivalent form; see
\cite{CarozzaPassarelli,Perez,CGMP}.
\begin{equation}\label{eq:averagenorm}
 \|h\|_{L(\log L)^\alpha,Q}
 \simeq_\alpha\frac{1}{|Q|}\int_Q
 |h(x)|\left[\log\left(e+
       \frac{|h(x)|}{|Q|^{-1}\int_Q|h(y)|\,dy}\right)\right]^\alpha\dx,
\end{equation}
and, for $E\subset Q$ with $\rho=\frac{|E|}{|Q|}>0$,
\begin{equation}\label{eq:indicator}
 \|\ind_E\|_{L(\log L)^\alpha,Q}
 =\frac{1}{\Phi_\alpha^{-1}(\rho^{-1})}
 \simeq_\alpha\rho[\log\left(\frac{e}{\rho}\right)]^\alpha.
\end{equation}

\begin{lemma}\label{lem:covering}
Let $\alpha,\gamma\ge0$. Suppose that for every cube $Q$,
every bounded $h\ge0$ supported on $Q$, and every $\lambda>0$,
\begin{equation}\label{eq:localtest}
 \|h\|_{L(\log L)^\alpha,Q}>\lambda
 \quad\Longrightarrow\quad
 w(Q)\le B\int_Q\Phi_\gamma\left(\frac{h(x)}{\lambda}\right)w(x)\dx.
\end{equation}
Then
\begin{equation}\label{eq:globaltest}
 w\bigl(\{M_{L(\log L)^\alpha}f>\lambda\}\bigr)
 \le C_{n,\gamma}B
       \int_{\R^n}\Phi_\gamma\left(\frac{|f(x)|}{\lambda}\right)w(x)\dx.
\end{equation}
Conversely, \eqref{eq:globaltest} with outer constant $B$
implies \eqref{eq:localtest} with the same constant.
\end{lemma}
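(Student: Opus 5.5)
The plan is to prove the forward implication with a Vitali-type covering argument. The local hypothesis \eqref{eq:localtest} will be applied on a dilated cube, which removes any need for a doubling property of $w$. The converse is immediate from the definition of the maximal operator.

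\emph{Converse.} Let $Q$ be a cube, $h\ge0$ bounded and supported on $Q$, and suppose $\|h\|_{L(\log L)^\alpha,Q}>\lambda$. Every $x\in Q$ lies in $Q$, so $M_{L(\log L)^\alpha}h(x)\ge\|h\|_{L(\log L)^\alpha,Q}>\lambda$. Hence $Q\subset\{M_{L(\log L)^\alpha}h>\lambda\}$. Then \eqref{eq:globaltest} with outer constant $B$ gives $w(Q)\le B\int_{\R^n}\Phi_\gamma(h/\lambda)w\dx$. Since $h$ vanishes off $Q$ and $\Phi_\gamma(0)=0$, the integral is over $Q$, which is \eqref{eq:localtest} with the same constant $B$.

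\emph{Forward direction, Step 1: choosing the cubes.} Fix $f\in L^\infty_c$ and $\lambda>0$, and let $\Omega=\{M_{L(\log L)^\alpha}f>\lambda\}$. Each $x\in\Omega$ lies in a cube $Q_x$ with $\|f\|_{L(\log L)^\alpha,Q_x}>\lambda$. The side lengths of these cubes are uniformly bounded. Indeed, since $f$ is bounded with compact support, the triangle inequality and homogeneity together with \eqref{eq:indicator} give $\|f\|_{L(\log L)^\alpha,Q}\le\|f\|_\infty\,\|\ind_{\operatorname{supp}f\cap Q}\|_{L(\log L)^\alpha,Q}$, and this tends to $0$ as $|Q|\to\infty$. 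By the Vitali covering lemma for cubes of bounded size, there is a countable disjoint subfamily $\{Q_j\}$ with $\Omega\subset\bigcup_j 5Q_j$.

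\emph{Step 2: the key estimate on each dilated cube.} I would show that
$$
 w(5Q_j)\le C_{n,\gamma}B\int_{Q_j}\Phi_\gamma\left(\frac{|f|}{\lambda}\right)w\dx .
$$
Put $h=|f|\ind_{Q_j}$, which is bounded and supported on $5Q_j$, and set $a=5^n$. Since $\Phi_\alpha$ is convex with $\Phi_\alpha(0)=0$, we have $\Phi_\alpha(at)\ge a\Phi_\alpha(t)$. Therefore
$$
 \frac{1}{|5Q_j|}\int_{5Q_j}\Phi_\alpha\left(\frac{a h}{\lambda}\right)\dx
 \ge\frac{1}{|Q_j|}\int_{Q_j}\Phi_\alpha\left(\frac{|f|}{\lambda}\right)\dx>1 ,
$$
where the last inequality is \eqref{eq:normtest}. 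By \eqref{eq:normtest} again, $\|h\|_{L(\log L)^\alpha,5Q_j}>\lambda/a$. Applying \eqref{eq:localtest} on $5Q_j$ at level $\lambda/a$ gives
$$
 w(5Q_j)\le B\int_{Q_j}\Phi_\gamma\left(\frac{a|f|}{\lambda}\right)w\dx .
$$
The dilation bound \eqref{eq:dilation} then gives $\Phi_\gamma(at)\le a(1+\log a)^\gamma\Phi_\gamma(t)$, which proves the claim with $C_{n,\gamma}=5^n(1+n\log5)^\gamma$.

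\emph{Step 3: summing.} Since the $Q_j$ are disjoint, summing the estimate of Step 2 over $j$ yields \eqref{eq:globaltest}.

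The main obstacle is that the covering lemma only controls $\Omega$ by the enlargements $5Q_j$, while $w$ is not assumed doubling. The point of Step 2 is that the local hypothesis applied to $5Q_j$, with $f$ truncated to $Q_j$, bounds $w(5Q_j)$ directly. A minor technical point is the applicability of Vitali to a possibly uncountable family; the uniform bound on side lengths from Step 1 is exactly what that lemma requires.
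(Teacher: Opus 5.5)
Your proof is correct and follows essentially the paper's argument. It selects disjoint cubes $Q_j$ by a Vitali-type covering, applies the local hypothesis on the dilated cube to $|f|\ind_{Q_j}$ (convexity, $\Phi_\alpha(at)\ge a\Phi_\alpha(t)$, keeps the modular average above one), uses \eqref{eq:dilation}, sums over the disjoint $Q_j$, and handles the converse exactly as the paper does. The only difference is technical: the paper takes a compact $F\subset\{M_{L(\log L)^\alpha}f>\lambda\}$, a finite subcover with greedy $3Q_j$ selection, and inner regularity, whereas you use the infinite $5r$-covering lemma, which is why you need (and correctly supply, from $f\in L^\infty_c$) the uniform bound on side lengths.
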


\begin{proof}
Fix a compact $F\subset\{M_{L(\log L)^\alpha}f>\lambda\}$.
Each point of $F$ belongs to a cube on which the modular
average exceeds one. Enlarge these cubes slightly, preserving
this strict inequality, so that their interiors cover $F$.
From a finite subcover, select cubes in decreasing order of
side length and discard each cube meeting an already selected
cube. The selected cubes $Q_j$ are pairwise disjoint and satisfy
$$
 F\subset\bigcup_j3Q_j,\qquad
 \frac{1}{|Q_j|}\int_{Q_j}\Phi_\alpha\left(\frac{|f(x)|}{\lambda}\right)\dx>1.
$$
By \eqref{eq:convexscale},
$$
 \frac{1}{|3Q_j|}\int_{3Q_j}
 \Phi_\alpha\left(\frac{3^n|f(x)|\ind_{Q_j}(x)}{\lambda}\right)\dx
 \ge\frac{1}{|Q_j|}\int_{Q_j}\Phi_\alpha\left(\frac{|f(x)|}{\lambda}\right)\dx>1.
$$
Thus \eqref{eq:localtest}, applied on $3Q_j$, yields
$$
 \begin{aligned}
 w(F)&\le\sum_jw(3Q_j)
 \le B\sum_j\int_{Q_j}\Phi_\gamma\left(\frac{3^n|f(x)|}{\lambda}\right)w(x)\dx\\
 &\le3^n(1+n\log3)^\gamma B
       \int_{\R^n}\Phi_\gamma\left(\frac{|f(x)|}{\lambda}\right)w(x)\dx.
 \end{aligned}
$$
Inner regularity proves \eqref{eq:globaltest}.
Conversely,
$$
 \|h\|_{L(\log L)^\alpha,Q}>\lambda
 \quad\Longrightarrow\quad
 Q\subset\{M_{L(\log L)^\alpha}(h\ind_Q)>\lambda\},
$$
and the global estimate gives \eqref{eq:localtest}.
\end{proof}

For integers $k\ge0$, we use the standard equivalence
\begin{equation}\label{eq:iterate}
 M_{L(\log L)^k}f\simeq_{n,k}M^{k+1}f;
\end{equation}
see \cite[Section~6.1.2]{CGMP}. We also use the
Coifman--Rochberg estimate \cite{CR}:
\begin{equation}\label{eq:CR}
 \|\log Mh\|_{\BMO}\le C_n,
 \qquad0\le h\in L^1(\R^n),\quad h\not\equiv0.
\end{equation}

Write $A_\infty=\bigcup_{p>1}A_p$. The Coifman--Fefferman inequalities
\cite{CoifmanFefferman,Perez,CGMP,LMRT}, together with
\eqref{eq:iterate}, give, for $s>0$ and $v\in A_\infty$,
$$
 \begin{aligned}
 \int_{\R^n}(T^*f)^s v\dx
 &\le C\int_{\R^n}(Mf)^s v\dx,\\
 \int_{\R^n}|T_b^mf|^s v\dx
 &\le C\int_{\R^n}(M_{L(\log L)^m}f)^s v\dx,
 \qquad \|b\|_{\BMO}\le1.
 \end{aligned}
$$
For $\gamma\ge0$ and $w\in A_2$, the corresponding weak
modular estimate for $T^*$ is
\begin{equation}\label{eq:comparison}
 \sup_{t>0}\frac{w(\{T^*f>t\})}{\Phi_\gamma\left(\frac{1}{t}\right)}
 \le C\sup_{t>0}\frac{w(\{Mf>t\})}{\Phi_\gamma\left(\frac{1}{t}\right)}.
\end{equation}
For commutators, one has
\begin{equation}\label{eq:comparisoncomm}
 \sup_{t>0}\frac{w(\{|T_b^mf|>t\})}{\Phi_\gamma\left(\frac{1}{t}\right)}
 \le C\sup_{t>0}
       \frac{w(\{M_{L(\log L)^m}f>t\})}{\Phi_\gamma\left(\frac{1}{t}\right)},
\end{equation}
provided that $\|b\|_{\BMO}\le1$. Here $C$ depends on
$n,T,\gamma,m,[w]_{A_2}$, with $m$ omitted in
\eqref{eq:comparison}. These estimates follow from
\cite[Theorem~3.1, formula~(3.9)]{CGMP}.
Apply that theorem with
$\psi(t)=\frac{1}{\Phi_\gamma\left(\frac{1}{t}\right)}$.
Its scalar hypotheses follow from
$$
 \psi(2t)\le2(1+\log2)^\gamma\psi(t).
$$
Moreover, $\frac{\psi(t)^2}{t}$ is increasing, and hence
$$
 \int_0^t\frac{\psi(s)^2}{s}\,ds\simeq_\gamma\psi(t)^2.
$$
The integral is convex, so $\psi^2$ is quasi-convex.
Truncating the left-hand function in size and support gives
the required finiteness; monotone convergence removes the
truncation.

\vspace{0.1cm}

\begin{proof}[Proof of Theorem~\ref{thm:maximal}]
Suppose first that $w\in A_1^{(\beta)}$. Fix a cube $Q$
and a bounded function $h\ge0$ supported on $Q$ such that
$\|h\|_{L(\log L)^\alpha,Q}>\lambda$.
We verify the local estimate in Lemma~\ref{lem:covering}.
For $\beta=0$, \eqref{eq:beta0} gives
$\frac{w_Q}{w(x)}\le[w]_{A_1^{(0)}}$ almost everywhere on $Q$.
Consequently,
$$
 \begin{aligned}
 1&<\frac{1}{|Q|}\int_Q
       \Phi_\alpha\left(\frac{h(x)}{\lambda}\right)\dx\\
 &=\frac{1}{w(Q)}\int_Q\frac{w_Q}{w(x)}
       \Phi_\alpha\left(\frac{h(x)}{\lambda}\right)w(x)\dx\\
 &\le\frac{[w]_{A_1^{(0)}}}{w(Q)}
       \int_Q\Phi_\alpha\left(\frac{h(x)}{\lambda}\right)w(x)\dx.
 \end{aligned}
$$
This proves the required estimate in this case.

Let $\beta>0$. By Proposition~\ref{lem:growth},
\eqref{eq:densityD} holds with $D=e[w]_{A_1^{(\beta)}}$.
The argument giving \eqref{eq:tail} therefore yields
$$
 \frac{1}{|Q|}
 \left|\left\{x\in Q:\frac{w_Q}{w(x)}
          >e[w]_{A_1^{(\beta)}}t^\beta\right\}\right|
 \le
 \begin{cases}
 1,&0<t\le1,\\
 e^{1-t},&t>1.
 \end{cases}
$$
Integration of the distribution function gives
$$
 \begin{aligned}
 &\frac{1}{|Q|}\int_Q
   \exp\left[\frac{1}{4}
       \left(\frac{w_Q}{e[w]_{A_1^{(\beta)}}w(x)}\right)^{\frac{1}{\beta}}
       \right]\dx\\
 &\quad=1+\frac{1}{4}\int_0^\infty e^{\frac{t}{4}}
       \frac{\left|\left\{x\in Q:\frac{w_Q}{w(x)}
               >e[w]_{A_1^{(\beta)}}t^\beta\right\}\right|}{|Q|}\,dt\\
 &\quad\le1+\frac{1}{4}\int_0^1e^{\frac{t}{4}}\,dt
                  +\frac{e}{4}\int_1^\infty e^{-\frac{3t}{4}}\,dt
       =\frac{4}{3}e^{\frac{1}{4}}<2.
 \end{aligned}
$$
Thus, with $C_\beta=e4^\beta$,
\begin{equation}\label{eq:proofexp}
 \frac{1}{|Q|}\int_Q
 \exp\left[\left(
       \frac{w_Q}{C_\beta[w]_{A_1^{(\beta)}}w(x)}
       \right)^{\frac{1}{\beta}}\right]\dx\le2.
\end{equation}

We next verify the scalar inequality
\begin{equation}\label{eq:scalar}
 \Phi_\alpha(t)
 \le\frac{(\alpha+4)^\beta}{s}\Phi_{\alpha+\beta}(t)
       +\frac{1}{4}\exp\left(s^{\frac{1}{\beta}}\right),
 \qquad s>0,\quad t\ge0.
\end{equation}
If $\log(e+t)\ge\frac{s^{\frac{1}{\beta}}}{\alpha+4}$, then
$$
 \frac{(\alpha+4)^\beta}{s}\Phi_{\alpha+\beta}(t)
 =\frac{(\alpha+4)^\beta}{s}[\log(e+t)]^\beta\Phi_\alpha(t)
 \ge\Phi_\alpha(t).
$$
Otherwise, set $v=\frac{s^{\frac{1}{\beta}}}{\alpha+4}>1$.
Since $\log v\le v$,
$$
 \Phi_\alpha(t)\le v^\alpha e^v
 \le e^{(\alpha+1)v}
 =e^{-3v}e^{(\alpha+4)v}
 \le\frac{1}{4}\exp\left(s^{\frac{1}{\beta}}\right).
$$
This proves \eqref{eq:scalar}.

Apply \eqref{eq:scalar} with
$$
 s=\frac{w_Q}{C_\beta[w]_{A_1^{(\beta)}}w(x)},
 \qquad t=\frac{h(x)}{\lambda}.
$$
Averaging over $Q$ and using \eqref{eq:proofexp}, we obtain
$$
 \begin{aligned}
 1&<\frac{1}{|Q|}\int_Q
       \Phi_\alpha\left(\frac{h(x)}{\lambda}\right)\dx\\
 &\le\frac{C_\beta(\alpha+4)^\beta[w]_{A_1^{(\beta)}}}{w(Q)}
       \int_Q\Phi_{\alpha+\beta}
          \left(\frac{h(x)}{\lambda}\right)w(x)\dx+\frac{1}{2}.
 \end{aligned}
$$
It follows that
$$
 w(Q)\le2C_\beta(\alpha+4)^\beta[w]_{A_1^{(\beta)}}
       \int_Q\Phi_{\alpha+\beta}
          \left(\frac{h(x)}{\lambda}\right)w(x)\dx.
$$
Lemma~\ref{lem:covering} proves \eqref{eq:orliczmaximal}.

Conversely, suppose that \eqref{eq:orliczmaximal} holds with
constant $B$. Fix $E\subset Q$ with
$\rho=\frac{|E|}{|Q|}>0$, and set
$$
 r=\Phi_\alpha^{-1}\left(\frac{1}{\rho}\right),
 \qquad f=\ind_E,\qquad \lambda=\frac{1}{2r}.
$$
By \eqref{eq:indicator},
$$
 M_{L(\log L)^\alpha}\ind_E(x)
 \ge\|\ind_E\|_{L(\log L)^\alpha,Q}
 =\frac{1}{r}>\lambda,\qquad x\in Q.
$$
Therefore
$$
 \begin{aligned}
 w(Q)&\le B\Phi_{\alpha+\beta}(2r)w(E)\\
 &\le2(1+\log2)^{\alpha+\beta}B
       \Phi_{\alpha+\beta}(r)w(E)\\
 &=\frac{2(1+\log2)^{\alpha+\beta}B}{\rho}
       [\log(e+r)]^\beta w(E).
 \end{aligned}
$$
Here we used \eqref{eq:dilation} and
$\Phi_\alpha(r)=\frac{1}{\rho}$. Since $\Phi_\alpha(t)\ge t$,
we have $r\le\frac{1}{\rho}$, and hence
$$
 \log(e+r)\le\log\left(e+\frac{1}{\rho}\right)
 \le\log(e+1)\log\left(\frac{e}{\rho}\right).
$$
Consequently,
$$
 \frac{w(E)}{w(Q)}
 \ge\frac{\rho}{C_{\alpha,\beta}B
                   [\log\left(\frac{e}{\rho}\right)]^\beta}.
$$
The converse assertion in Proposition~\ref{lem:growth}
gives $w\in A_1^{(\beta)}$ and
$$
 [w]_{A_1^{(\beta)}}\le C_{\alpha,\beta}B.
$$
Combining the two estimates proves the comparison of the
least constant with $[w]_{A_1^{(\beta)}}$.
\end{proof}

\begin{proof}[Proof of Theorem~\ref{thm:CZ}]
Suppose that $w\in A_1^{(\beta)}$. By \eqref{eq:Apcontrol},
$[w]_{A_2}\le[w]_{A_1^{(\beta)}}$.
Using \eqref{eq:comparison}, Corollary~\ref{cor:maximal}, and
\eqref{eq:submultiplicative}, we obtain
$$
 \begin{aligned}
 \sup_{t>0}\frac{w(\{T^*f>t\})}{\Phi_\beta\left(\frac{1}{t}\right)}
 &\le C\sup_{t>0}\frac{w(\{Mf>t\})}{\Phi_\beta\left(\frac{1}{t}\right)}\\
 &\le C[w]_{A_1^{(\beta)}}
       \int_{\R^n}\Phi_\beta(|f(x)|)w(x)\dx.
 \end{aligned}
$$
Here $C$ may depend on $n,T,\beta$ and
$[w]_{A_1^{(\beta)}}$. Taking $t=1$ and replacing $f$ by
$\frac{f}{\lambda}$ proves sufficiency for $T^*$, and hence for $T$.

Conversely, assume the estimate for $|Tf|$ with constant $B$.
Fix a cube $Q$ with center $y_0$. For $R>4\sqrt n$,
\eqref{eq:nondegenerate} gives a point $x_0$ such that
$$
 \begin{aligned}
 R\ell(Q)&\le |x_0-y_0|
       \le\left(\frac{c_K}{c_0}\right)^{\frac{1}{n}}R\ell(Q)
        \end{aligned}
        $$
and
$$
 \begin{aligned}
 \min\{|K(x_0,y_0)|,|K(y_0,x_0)|\}
       &\ge\frac{c_0}{R^n|Q|}.
 \end{aligned}
$$
Let $Q'$ be the cube centered at $x_0$ with side length
$\ell(Q)$. Then $\operatorname{dist}(Q,Q')\ge\frac{R}{2}\ell(Q)$.
By \eqref{eq:smooth},
$$
 \begin{aligned}
 &\sup_{(x,y)\in Q'\times Q}
 \bigl(|K(x,y)-K(x_0,y_0)|
       +|K(y,x)-K(y_0,x_0)|\bigr)\\
 &\qquad\le\frac{C_{n,\tau}c_K}{R^{n+\tau}|Q|}
       \le\frac{c_0}{2R^n|Q|},
 \end{aligned}
$$
where $R$ is now fixed sufficiently large, depending only on
the kernel parameters. Set $c=\frac{c_0}{2R^n}$.
For a measurable $E\subset Q$ with $\rho=\frac{|E|}{|Q|}>0$,
write each kernel as its value at the centers plus the error.
The triangle inequality gives
$$
 \begin{aligned}
 |T\ind_{Q'}(y)|
 &\ge\left(|K(y_0,x_0)|-\frac{c}{|Q|}\right)|Q'|
       \ge c, &&y\in Q,\\
 |T\ind_E(x)|
 &\ge\left(|K(x_0,y_0)|-\frac{c}{|Q|}\right)|E|
       \ge c\rho, &&x\in Q'.
 \end{aligned}
$$
Apply the assumed estimate at levels $\frac{c}{2}$ and
$\frac{c\rho}{2}$, respectively. By \eqref{eq:dilation},
$$
 \begin{aligned}
 w(Q)
 &\le B\Phi_\beta\left(\frac{2}{c}\right)w(Q')\\
 &\le B^2\Phi_\beta\left(\frac{2}{c}\right)
             \Phi_\beta\left(\frac{2}{c\rho}\right)w(E)\\
 &\le C_{n,T,\beta}B^2
       \frac{\left[\log\left(\frac{e}{\rho}\right)\right]^\beta}{\rho}
       w(E).
 \end{aligned}
$$
Thus Proposition~\ref{lem:growth} yields
$$
 [w]_{A_1^{(\beta)}}\le C_{n,T,\beta}B^2.
$$
An estimate for $T^*$ implies the same estimate for $|T|$,
so necessity holds in both cases.
\end{proof}

\begin{proof}[Proof of Theorem~\ref{thm:commutator}]
Suppose that $w\in A_1^{(\beta)}$ and $\|b\|_{\BMO}\le1$.
By \eqref{eq:comparisoncomm}, Theorem~\ref{thm:maximal}, and
\eqref{eq:submultiplicative},
$$
 \begin{aligned}
 \sup_{t>0}\frac{w(\{|T_b^mf|>t\})}{\Phi_{\beta+m}\left(\frac{1}{t}\right)}
 &\le C\sup_{t>0}
       \frac{w(\{M_{L(\log L)^m}f>t\})}{\Phi_{\beta+m}\left(\frac{1}{t}\right)}\\
 &\le C[w]_{A_1^{(\beta)}}
       \int_{\R^n}\Phi_{\beta+m}(|f(x)|)w(x)\dx.
 \end{aligned}
$$
The constant may depend on $n,T,\beta,m$ and
$[w]_{A_1^{(\beta)}}$, but is independent of $b$.
Taking $t=1$ and replacing $f$ by $\frac{f}{\lambda}$ gives the
estimate for $\|b\|_{\BMO}\le1$. For $\|b\|_{\BMO}>0$, use
$$
 T_b^mf=\|b\|_{\BMO}^mT_{\frac{b}{\|b\|_{\BMO}}}^mf.
$$

Conversely, assume \eqref{eq:commutator} with constant $B$.
For $\|b\|_{\BMO}\le1$, it implies
\begin{equation}\label{eq:commassumed}
 w(\{|T_b^mf|>\lambda\})\le B
       \int_{\R^n}\Phi_{\beta+m}\left(\frac{|f(x)|}{\lambda}\right)w(x)\dx.
\end{equation}
Fix a cube $Q$ with center $y_0$. Choose $R>4\sqrt n$
sufficiently large and apply \eqref{eq:nondegenerate} with
$r=R\ell(Q)$. Let $Q'$ have center $x_0$ and side length
$\ell(Q)$, where $x_0$ is the resulting point. The size and
smoothness estimates give
$$
 \begin{gathered}
 \frac{R}{2}\ell(Q)\le\operatorname{dist}(Q,Q')
       \le C_{n,T}R\ell(Q),\\
 \min\{|K(x_0,y_0)|,|K(y_0,x_0)|\}
       \ge\frac{c_0}{R^n|Q|},\\
 \sup_{(x,y)\in Q'\times Q}
 \bigl(|K(x,y)-K(x_0,y_0)|
       +|K(y,x)-K(y_0,x_0)|\bigr)
       \le\frac{c_0}{2R^n|Q|}.
 \end{gathered}
$$
Put $c=\frac{c_0}{2R^n}$. Since $\|\ind_{Q'}\|_{\BMO}\le1$,
for $y\in Q$ we have
$$
 \begin{aligned}
 |T_{\ind_{Q'}}^m\ind_{Q'}(y)|
 &=|T\ind_{Q'}(y)|\\
 &\ge |K(y_0,x_0)|\,|Q'|
       -\int_{Q'}|K(y,x)-K(y_0,x_0)|\,dx\ge c.
 \end{aligned}
$$
Applying \eqref{eq:commassumed} at level $\frac{c}{2}$ gives
\begin{equation}\label{eq:companioncomm}
 w(Q)\le B\Phi_{\beta+m}\left(\frac{2}{c}\right)w(Q')
       \le C_{T,\beta,m}B w(Q').
\end{equation}

Let $h\ge0$ be bounded, nonzero, and supported on $Q$.
Write $h_Q=|Q|^{-1}\int_Qh(y)\,dy$ and
$N=\|h\|_{L(\log L)^m,Q}$. Define
$$
 b=\frac{1}{C_n}\log M\bigl[\left(e+\frac{h}{h_Q}\right)\ind_Q\bigr].
$$
By \eqref{eq:CR}, $C_n$ can be fixed so that
$\|b\|_{\BMO}\le1$. Lebesgue differentiation and the
separation of $Q,Q'$ give
$$
 \begin{aligned}
 C_nb(y)&\ge\log\left(e+\frac{h(y)}{h_Q}\right), &&\text{a.e. }y\in Q,\\
 C_nb(x)&\le C_{n,T}, &&x\in Q'.
 \end{aligned}
$$
For the second line, use
$\int_Q\left(e+\frac{h}{h_Q}\right)=(e+1)|Q|$ and the lower bound for the
side length of any cube meeting both $Q$ and $Q'$.
Fix $L>2\max\{1,C_{n,T}\}$ and set
$$
 E=\{y\in Q:\log\left(e+\frac{h(y)}{h_Q}\right)>L\}.
$$
Then
\begin{equation}\label{eq:localcommdominance}
 b(y)-b(x)\ge\frac{1}{2C_n}\log\left(e+\frac{h(y)}{h_Q}\right),
 \qquad (x,y)\in Q'\times E.
\end{equation}
If
$$
 \int_Eh(y)[\log\left(e+\frac{h(y)}{h_Q}\right)]^m\,dy
 \ge\frac{1}{2}\int_Qh(y)[\log\left(e+\frac{h(y)}{h_Q}\right)]^m\,dy,
$$
then $b(y)-b(x)>0$ for $(x,y)\in Q'\times E$.
By \eqref{eq:commkernel}, \eqref{eq:averagenorm}, and
\eqref{eq:localcommdominance}, for $x\in Q'$ we obtain
$$
 \begin{aligned}
 |T_b^m(h\ind_E)(x)|
 &\ge |K(x_0,y_0)|\int_E(b(y)-b(x))^m h(y)\,dy\\
 &\quad-\int_E(b(y)-b(x))^m
               |K(x,y)-K(x_0,y_0)|h(y)\,dy\\
 &\ge\frac{c}{|Q|}\int_E(b(y)-b(x))^m h(y)\,dy\\
 &\ge\frac{c}{(2C_n)^m|Q|}
       \int_Eh(y)\left[\log\left(e+\frac{h(y)}{h_Q}\right)\right]^m\,dy
 \ge c_{n,T,m}N.
 \end{aligned}
$$
Applying \eqref{eq:commassumed} at level $\frac{c_{n,T,m}N}{2}$
and using \eqref{eq:dilation}, we obtain
\begin{equation}\label{eq:commQprime}
 w(Q')\le C_{n,T,\beta,m}B
       \int_Q\Phi_{\beta+m}\left(\frac{h(y)}{N}\right)w(y)\,dy.
\end{equation}
In the remaining case,
$$
 N\lesssim_m\frac{2}{|Q|}
       \int_{Q\setminus E}h(y)[\log\left(e+\frac{h(y)}{h_Q}\right)]^m\,dy
 \le2L^mh_Q.
$$
Hence, for $x\in Q'$,
$$
 \begin{aligned}
 |T_{\ind_Q}^mh(x)|=|Th(x)|
 &\ge |K(x_0,y_0)|\int_Qh(y)\,dy
       -\int_Q|K(x,y)-K(x_0,y_0)|h(y)\,dy\\
 &\ge ch_Q\ge c_{n,T,m}N.
 \end{aligned}
$$
The same application of \eqref{eq:commassumed}, now with
$b=\ind_Q$, proves \eqref{eq:commQprime}.
Combining it with \eqref{eq:companioncomm} gives
$$
 N>\lambda
 \quad\Longrightarrow\quad
 w(Q)\le C_{n,T,\beta,m}B^2
       \int_Q\Phi_{\beta+m}\left(\frac{h(y)}{\lambda}\right)w(y)\,dy.
$$
Lemma~\ref{lem:covering} and Theorem~\ref{thm:maximal}
therefore yield
$$
 w(\{M_{L(\log L)^m}f>\lambda\})
 \le C_{n,T,\beta,m}B^2
       \int_{\R^n}\Phi_{\beta+m}\left(\frac{|f(x)|}{\lambda}\right)w(x)\dx,
$$
and hence
$[w]_{A_1^{(\beta)}}\le C_{n,T,\beta,m}B^2$.
\end{proof}

\section*{Acknowledgements}
This work was supported by the National Natural Science Foundation
of China (No.~12101010), the Natural Science Foundation of Anhui
Province (No.~2108085QA19), and the Key Scientific Project of
Higher Education Institutions in Anhui Province
(No.~2023AH050145).

\medskip
\noindent\textbf{Conflict of interest.}
The author declares that there is no conflict of interest in
relation to this article.

\medskip
\noindent\textbf{Data availability.}
Data sharing is not applicable to this article, as no datasets
were generated or analysed during the current study.

\medskip
\noindent\textbf{Use of AI tools.}
The author developed the main ideas and determined the overall
direction of this work. GPT-6 Astra was used to assist with the
geometric argument and the direct proof of the necessary
condition in Theorem~\ref{thm:CZ}, as well as
with checking scaling calculations and revising the exposition.
The author takes full responsibility for the mathematical
content of the paper.

\bigskip
\noindent\textsc{School of Mathematics and Statistics,
Anhui Normal University, Wuhu 241002, China.}

\smallskip
\noindent\textit{Email address:}
\href{mailto:Wangdh1990@126.com}{\texttt{Wangdh1990@126.com}}

\end{document}